\newtheorem{theorem}{Theorem}[section]
\newtheorem{proposition}[theorem]{Proposition}
\newtheorem{lemma}[theorem]{Lemma}
\newtheorem{claim}[theorem]{Claim}
\newtheorem*{claim*}{Claim}
\newtheorem{Main Conjecture}[theorem]{Main Conjecture}
\newtheorem{conjecture}[theorem]{Conjecture}
\newtheorem{definition}[theorem]{Definition}
\theoremstyle{remark}
\newtheorem{example}[theorem]{Example}
\newtheorem{Remark}[theorem]{Remark}
\theoremstyle{plain}
\newcommand{\cellsize}{11}
\newlength{\cellsz} \setlength{\cellsz}{\cellsize\unitlength}
\newsavebox{\cell}
\sbox{\cell}{\begin{picture}(\cellsize,\cellsize)
\put(0,0){\line(1,0){\cellsize}}
\put(0,0){\line(0,1){\cellsize}}
\put(\cellsize,0){\line(0,1){\cellsize}}
\put(0,\cellsize){\line(1,0){\cellsize}}
\end{picture}}
\newcommand\cellify[1]{\def\thearg{#1}\def\nothing{}%
\ifx\thearg\nothing
\vrule width0pt height\cellsz depth0pt\else
\hbox to 0pt{\usebox{\cell} \hss}\fi%
\vbox to \cellsz{
\vss
\hbox to \cellsz{\hss$#1$\hss}
\vss}}
\newcommand\tableau[1]{\vtop{\let\\\cr
\baselineskip -16000pt \lineskiplimit 16000pt \lineskip 0pt
\ialign{&\cellify{##}\cr#1\crcr}}}
\newcommand{\excise}[1]{}
\title{Newell-Littlewood numbers II:\\ extended Horn inequalities}
\author{Shiliang Gao}
\author{Gidon Orelowitz}
\author{Alexander Yong}
\address{Dept.~of Mathematics, University of Illinois at Urbana-Champaign, Urbana, IL 61801}
\email{sgao23@illinois.edu, gidono2@illinois.edu, ayong@illinois.edu}
\date{December 3, 2021}
\begin{document}
\pagestyle{plain}

\maketitle
\begin{abstract}
The Newell-Littlewood numbers $N_{\mu,\nu,\lambda}$ are
tensor product multiplicities of Weyl modules for classical Lie groups, in the stable limit. For which triples of partitions $(\mu,\nu,\lambda)$ does $N_{\mu,\nu,\lambda}>0$ hold? The Littlewood-Richardson coefficient case  is solved by the Horn inequalities (in work of
A.~Klyachko and A.~Knutson-T.~Tao). We extend these celebrated linear inequalities
to a much larger family, suggesting a general solution.
\end{abstract}

\begin{center}
\emph{In honor of Ian Goulden and David Jackson, and their groundbreaking discoveries}
\end{center}

\section{Introduction}

\subsection{Background}
This is a sequel to \cite{GOY}. We study \emph{Newell-Littlewood numbers}
\cite{Newell, Littlewood} 
\begin{equation}
\label{eqn:Newell-Littlewood}
N_{\mu,\nu,\lambda}=\sum_{\alpha,\beta,\gamma} c_{\alpha,\beta}^{\mu}
c_{\alpha,\gamma}^{\nu}c_{\beta,\gamma}^{\lambda};
\end{equation}
the indices are partitions 
in 
${\sf Par}_n=\{(\lambda_1, \lambda_2, \ldots, \lambda_n)\in {\mathbb Z}_{\geq 0}^n:
\lambda_1\geq \lambda_2\geq \ldots \geq \lambda_n\}$.
Also, $c^{\mu}_{\alpha,\beta}$ is the  
\emph{Littlewood-Richardson coefficient}. The Newell-Littlewood
numbers are tensor product multiplicities for the irreducible representations of a classical Lie algebra ${\mathfrak g}$ in the ``stable limit''; we refer the reader to \cite{GOY} for  additional background and references, such as \cite{Hahn}.

Consider the problem:
\begin{center}
\emph{Classify  $(\mu,\nu,\lambda)\in {\sf Par}_n^3$ such that 
$N_{\mu,\nu,\lambda}>0$.}
\end{center}  
Since   $N_{\mu,\nu,\lambda}\!\!=\!\!c_{\mu,\nu}^{\lambda}$ if $|\lambda| \!=\! |\mu|\!+\!|\nu|$ \cite[Lemma~2.2(II)]{GOY},
a subproblem asks when $c_{\mu,\nu}^{\lambda}>0$? The solution to that case is 1990's combined breakthrough work of
A.~Klyachko \cite{Klyachko} and A.~Knutson-T.~Tao \cite{KT}. For $I=\{i_1< \cdots<i_d\}\subseteq {\mathbb Z}_{>0}$,
let 
\[\tau(I):=(i_d-d\geq \cdots\geq i_2-2 \geq i_1-1)\in {\sf Par}_d.\]

\begin{theorem}\label{thm:classicalHorn}(\cite{Klyachko}, \cite{KT})
Let $\mu,\nu,\lambda \in {\sf Par}_n$ such that
$|\lambda|=|\mu|+|\nu|$. Then
$c_{\mu,\nu}^{\lambda}>0$ if and only if
for every $d<n$, and every triple of subsets $I,J,K\subseteq [n]$ of cardinality $d$ such that $c_{\tau(I),
\tau(J)}^{\tau(K)}> 0$,
\begin{equation}\label{eq:ineq}
\sum_{k\in K}\lambda_k\leq
\sum_{i\in
I}\mu_i+\sum_{j\in J}\nu_j.\end{equation}
\end{theorem}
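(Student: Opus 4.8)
The plan is to prove the two implications separately. Necessity of \eqref{eq:ineq} is a by-now-standard geometric argument, while sufficiency is the deep direction, which I would factor through Klyachko's determination of the Littlewood--Richardson cone together with the Knutson--Tao saturation theorem. For necessity, embed all partitions in an $a\times b$ box and use the classical fact that $c_{\mu,\nu}^{\lambda}>0$ if and only if $\sigma_{\mu}\cdot\sigma_{\nu}\cdot\sigma_{\lambda^{c}}\neq 0$ in $H^{*}(\mathrm{Gr}(a,a+b))$, equivalently $\Omega_{\mu}(F_{\bullet})\cap\Omega_{\nu}(G_{\bullet})\cap\Omega_{\lambda^{c}}(H_{\bullet})\neq\varnothing$ for generic flags; fix a point $V$ in this intersection. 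Given a triple $I,J,K$ of $d$-subsets with $c_{\tau(I),\tau(J)}^{\tau(K)}>0$, the corresponding Schubert classes on $\mathrm{Gr}(d,a)$ multiply to a nonzero class, so one may choose $d$-dimensional subspaces of $V$ in prescribed Schubert position relative to the three filtrations induced on $V$ by $F_{\bullet},G_{\bullet},H_{\bullet}$. Comparing dimensions of these subspaces (equivalently, degrees of the induced sub-line-bundles, or traces in Klyachko's Hermitian reformulation $C=A+B$) yields exactly $\sum_{k\in K}\lambda_{k}\le \sum_{i\in I}\mu_{i}+\sum_{j\in J}\nu_{j}$. The bookkeeping matching $\tau(\cdot)$ to Schubert codimensions and checking that the inequality closes up is routine but must be executed carefully.

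For sufficiency, the first step (Klyachko) is to show that \eqref{eq:ineq}, together with $|\lambda|=|\mu|+|\nu|$, cuts out precisely the rational polyhedral cone $\{(\mu,\nu,\lambda):c_{N\mu,N\nu}^{N\lambda}>0\text{ for some }N\ge 1\}$. This is geometric invariant theory: $c_{N\mu,N\nu}^{N\lambda}>0$ for some $N$ holds exactly when a natural line bundle $L_{\mu,\nu,\lambda}$ on a product of partial flag varieties carries a nonzero $\mathrm{SL}_{n}$-invariant section, i.e.\ when its semistable locus is nonempty. By the Hilbert--Mumford numerical criterion, nonemptiness of the semistable locus is equivalent to a family of linear inequalities indexed by one-parameter subgroups; the standard reduction (destabilizing one-parameter subgroups may be taken dominant, and the relevant fixed-point components are Schubert cells, whose nonvanishing intersection numbers are the $c_{\tau(I),\tau(J)}^{\tau(K)}$) shows these inequalities are indexed exactly by the Schubert data appearing in the theorem, yielding \eqref{eq:ineq}.

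The second step, and the main obstacle, is the saturation property: $c_{N\mu,N\nu}^{N\lambda}>0$ for some $N\ge 1$ implies $c_{\mu,\nu}^{\lambda}>0$. I would model $c_{\mu,\nu}^{\lambda}$ as the number of lattice points of the hive polytope $\mathrm{HIVE}_{\mu,\nu,\lambda}$ (triangular arrays with boundary determined by $\mu,\nu,\lambda$ and all rhombus inequalities), so that scaling the boundary by $N$ dilates the polytope by $N$. The hypothesis then says $N\cdot\mathrm{HIVE}_{\mu,\nu,\lambda}$ contains a lattice point, hence $\mathrm{HIVE}_{\mu,\nu,\lambda}$ is a nonempty rational polytope; the content is to produce an honest lattice point in it at $N=1$. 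Following Knutson--Tao, pass to the honeycomb dual, take a honeycomb with boundary $N(\mu,\nu,\lambda)$, classify its vertex types, prove that a honeycomb with generic boundary is rigid, and use the ``largest lift'' construction to arrange integral vertices before dividing back down. This structural analysis of honeycombs — rigidity and the degeneration behavior of edges and vertices — is the technical heart of the argument and where I expect the real difficulty to lie.
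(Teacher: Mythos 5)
This theorem is quoted by the paper from \cite{Klyachko} and \cite{KT}; the paper gives no proof of it, so there is nothing internal to compare against. Judged on its own terms, your sketch correctly identifies the standard architecture of the argument: (i) necessity by intersecting Schubert varieties in a fixed point $V$ of a nonempty triple intersection in $\mathrm{Gr}(a,a+b)$ and comparing dimensions/degrees, (ii) Klyachko's description of the \emph{saturated} cone via semistability and the Hilbert--Mumford criterion, and (iii) the Knutson--Tao saturation theorem via hives/honeycombs to drop from $c^{N\lambda}_{N\mu,N\nu}>0$ to $c^{\lambda}_{\mu,\nu}>0$.

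That said, as submitted this is a roadmap of two long papers, not a proof. Nearly every step is deferred: the dimension count in (i) that closes up to the stated inequality is asserted to be ``routine bookkeeping''; in (ii) the identification of the destabilizing one-parameter subgroups with triples $(I,J,K)$ and the appearance of $c^{\tau(K)}_{\tau(I),\tau(J)}>0$ as exactly the nonvanishing condition is the content of Klyachko's theorem and needs an actual argument (Klyachko's own route was via parabolic bundles on $\mathbb{P}^1$/Hermitian eigenvalues rather than the GIT phrasing you use, which is closer to Belkale's later treatment); and in (iii) ``classify vertex types, prove rigidity, use the largest lift'' is precisely where the $\sim 30$ pages of honeycomb combinatorics in \cite{KT} live. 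None of these gaps represents a wrong idea — the outline is faithful to the literature — but if the goal was to reproduce a proof rather than to survey one, you would need to carry out at least the dimension count in (i) and state precisely which theorem of \cite{Klyachko} and which theorem of \cite{KT} you are invoking for (ii) and (iii), since the paper under review treats Theorem~\ref{thm:classicalHorn} purely as a black box.
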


The recursive  \emph{Horn inequalities} (\ref{eq:ineq}) were introduced in A.~Horn's 1962 paper \cite{Horn}. The inequalities
have a pre-history \cite{Fulton,Bhatia}.

Let ${\mathfrak g}$ be a semisimple complex Lie algebra, $\Lambda_+$ be the set of dominant integral weights, and $L_{\mathfrak g}$ be the root lattice.
Suppose $V_{\lambda}$ is the irreducible
representation of ${\mathfrak g}$ indexed by $\lambda\in \Lambda_+$. Define multiplicities $m_{\mu,\nu}^{\lambda}$ by
\[V_\mu\otimes V_{\nu}=\bigoplus_{\lambda\in \Lambda_+} V_{\lambda}^{\oplus m_{\mu,\nu}^{\lambda}}.\]
The \emph{tensor semigroup} is
\[{\sf Tensor}({\mathfrak g})=\{(\mu,\nu,\lambda)\in \Lambda_+^3: m_{\mu,\nu}^{\lambda}>0\}.\]
Compare this with the \emph{saturated tensor semigroup},
\[{\sf SatTensor}({\mathfrak g})=\{(\mu,\nu,\lambda)\in \Lambda_+^3: \mu+\nu-w_0\cdot\lambda \in L_{\mathfrak g} \text{ and } \exists t\in {\mathbb Z}_{>0}, m_{t\mu,t\nu}^{t\lambda}>0\}\]
where $w_0$ is the longest length element of the Weyl group associated to ${\mathfrak g}$.

There are generalized Horn inequalities describing ${\sf SatTensor}({\mathfrak g})$ \cite{BK}. 
Since $N_{\mu,\nu,\lambda}$ is a tensor product multiplicity for ${\mathfrak g}$ of classical type $B,C,D$, 
these results
are related to our classification problem, but do not solve it. Classifying $N_{\mu,\nu,\lambda}>0$ concerns  ${\sf Tensor}({\mathfrak g})$ rather than the possibly different
${\sf SatTensor}({\mathfrak g})$. In type $A$, the \emph{saturation theorem} \cite{KT} implies \[{\sf Tensor}({\mathfrak s}{\mathfrak l}(n))
={\sf SatTensor}({\mathfrak s}{\mathfrak l}(n)).\] 
For the other classical types, saturation is either false, or not known (see \cite{Zel, KM}).\footnote{Since this paper was submitted, N.~Ressayre and the
authors \cite{NLIII} further study the relationship of our classification problem to \cite{BK}.}

N.~Ressayre \cite{Ress} introduces different generalized Horn inequalities that hold when the 
\emph{Kronecker coefficient} $g_{\mu,\nu,\lambda}$  is nonzero. Those coefficients are also tensor product multiplicities, but for Specht modules, not Weyl modules. 

\subsection{Main results}
We suggest an answer to our problem, by introducing a large, new family of inequalities extending (\ref{eq:ineq}). 

\begin{definition}
\label{def:main}
An extended Horn inequality is
\begin{equation}
\label{Ineq:Grand}
0\leq \sum_{i\in A} \mu_i - \sum_{i\in A'} \mu_i + \sum_{j\in B} \nu_j - \sum_{j\in B'} \nu_j + \sum_{k\in C} \lambda_k - \sum_{k\in C'} \lambda_k 
\end{equation}  
\end{definition}
\noindent
where $A,A',B,B',C,C' \subseteq [n]:=\{1,2,\ldots,n\}$ satisfy
\begin{itemize}
\item[(I)]
$A \cap A'= B \cap B' = C \cap C' = \emptyset$
\item[(II)]
$|A| = |B'|+|C'|,|B| =|A'|+|C'|, |C| = |A'|+|B'|$
\item[(III)]
There exist $A_1,A_2,B_1,B_2,C_1,C_2\subseteq [n]$ such that:
\begin{enumerate}
\item
$|A_1|=|A_2|=|A'|, |B_1|=|B_2|=|B'|, |C_1|=|C_2|=|C'|$
\item
$c^{\tau(A')}_{\tau(A_1),\tau(A_2)}, c^{\tau(B')}_{\tau(B_1),\tau(B_2)}, c^{\tau(C')}_{\tau(C_1),\tau(C_2)}>0$
\item
$c^{\tau(A)}_{\tau(B_1),\tau(C_2)}, c^{\tau(B)}_{\tau(C_1),\tau(A_2)}, c^{\tau(C)}_{\tau(A_1),\tau(B_2)}>0.$
\end{enumerate}
\end{itemize}

This family contains a number of simpler-to-state subfamilies, including the Horn inequalities (\ref{eq:ineq}) and those considered in \cite{GOY}; see Proposition~\ref{prop:specificineq}. This is our main result:
\begin{theorem}
\label{thm:intro}
$(\mu,\nu,\lambda)\in {\sf Par}_n^3$
satisfies (\ref{Ineq:Grand}) if $N_{\mu,\nu,\lambda}>0$.
\end{theorem}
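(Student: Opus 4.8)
The plan is to deduce (\ref{Ineq:Grand}) by adding six instances of the classical Horn inequalities (Theorem~\ref{thm:classicalHorn}), applied to the three Littlewood--Richardson coefficients witnessing $N_{\mu,\nu,\lambda}>0$. By (\ref{eqn:Newell-Littlewood}) there are $\alpha,\beta,\gamma\in{\sf Par}_n$ with $c^\mu_{\alpha,\beta},\,c^\nu_{\alpha,\gamma},\,c^\lambda_{\beta,\gamma}>0$. I will use two standard facts about a positive Littlewood--Richardson coefficient $c^\sigma_{\rho,\pi}$: the identity $|\sigma|=|\rho|+|\pi|$ and the containments $\rho,\pi\subseteq\sigma$ (so $\rho_i\le\sigma_i$ and $\pi_i\le\sigma_i$ for all $i$). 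Hence $|\mu|=|\alpha|+|\beta|$, $|\nu|=|\alpha|+|\gamma|$, $|\lambda|=|\beta|+|\gamma|$ and $\alpha,\beta\subseteq\mu$, $\alpha,\gamma\subseteq\nu$, $\beta,\gamma\subseteq\lambda$.

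Next I would fix the roles of the auxiliary sets. By (III)(1) each coefficient in (III)(2) is a Horn certificate as in Theorem~\ref{thm:classicalHorn} (its three indexing subsets have one common cardinality), while the coefficients in (III)(3) are ``unequal--size'' certificates with $|A|=|B_1|+|C_2|$, etc. Requiring the six certificates to be compatible with $c^\mu_{\alpha,\beta}$, $c^\nu_{\alpha,\gamma}$, $c^\lambda_{\beta,\gamma}$ forces a unique consistent labelling: $A_2,B_1$ are ``pieces of $\alpha$'', $A_1,C_2$ ``pieces of $\beta$'', $B_2,C_1$ ``pieces of $\gamma$'' (for instance $A_2$ appears both in the certificate governing the $\mu$--term, where it must be a piece of $\alpha$ or $\beta$, and in that governing the $\nu$--term, where it must be a piece of $\alpha$ or $\gamma$, so $A_2$ is a piece of $\alpha$, and the remaining assignments propagate). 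Theorem~\ref{thm:classicalHorn} applied to the three witnessing coefficients via the certificates (III)(2) then gives
\[
\sum_{i\in A'}\mu_i\le\sum_{i\in A_2}\alpha_i+\sum_{i\in A_1}\beta_i,\qquad
\sum_{j\in B'}\nu_j\le\sum_{j\in B_1}\alpha_j+\sum_{j\in B_2}\gamma_j,\qquad
\sum_{k\in C'}\lambda_k\le\sum_{k\in C_2}\beta_k+\sum_{k\in C_1}\gamma_k
\]
(the borderline case of cardinality $n$ being just the degree identities above), so the negative part of (\ref{Ineq:Grand}) is at most $P:=\bigl(\sum_{A_2}\alpha+\sum_{B_1}\alpha\bigr)+\bigl(\sum_{A_1}\beta+\sum_{C_2}\beta\bigr)+\bigl(\sum_{B_2}\gamma+\sum_{C_1}\gamma\bigr)$.

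It remains to bound the positive part of (\ref{Ineq:Grand}) below by the same $P$, that is, to prove $\sum_{i\in A}\mu_i\ge\sum_{i\in B_1}\alpha_i+\sum_{i\in C_2}\beta_i$ together with its two cyclic analogues; note $B_1,C_2$ are pieces of $\alpha,\beta\subseteq\mu$, so this is a statement built from $c^\mu_{\alpha,\beta}>0$. The key move is to trade a lower bound for an upper bound on the complement: since $|\mu|=|\alpha|+|\beta|$, it suffices to find $I,J\subseteq[n]$ with $|I|=|J|=|\ol A|$ and $c_{\tau(I),\tau(J)}^{\tau(\ol A)}>0$ (complements taken in $[n]$) such that $\ol I\supseteq B_1$ and $\ol J\supseteq C_2$; then
\[
\sum_{i\in A}\mu_i=|\mu|-\sum_{i\in\ol A}\mu_i\ge\Bigl(|\alpha|-\sum_{i\in I}\alpha_i\Bigr)+\Bigl(|\beta|-\sum_{j\in J}\beta_j\Bigr)=\sum_{i\in\ol I}\alpha_i+\sum_{j\in\ol J}\beta_j\ge\sum_{i\in B_1}\alpha_i+\sum_{j\in C_2}\beta_j,
\]
using Theorem~\ref{thm:classicalHorn} for the first inequality and $\alpha_i,\beta_j\ge0$ for the last. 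Producing such a triple from the certificate $c^{\tau(A)}_{\tau(B_1),\tau(C_2)}>0$, with the cardinality relations (II) and the disjointness (I), is the crux of the argument; I expect this to be a genuine combinatorial lemma about the Horn cone, to be proved either directly on the Littlewood--Richardson fillings recording $c^{\tau(A)}_{\tau(B_1),\tau(C_2)}>0$ --- this is presumably where the tableau operations set up above enter --- or by zero--padding $\tau(B_1),\tau(C_2)$ to $|A|$ parts and exploiting the complementation symmetries of the set of valid Horn triples. Granting this lemma and its cyclic versions, adding the three resulting lower bounds to the three upper bounds of the previous paragraph, whose $\alpha,\beta,\gamma$--contributions cancel exactly by (I) and (II), yields $\sum_{i\in A'}\mu_i+\sum_{j\in B'}\nu_j+\sum_{k\in C'}\lambda_k\le P\le\sum_{i\in A}\mu_i+\sum_{j\in B}\nu_j+\sum_{k\in C}\lambda_k$, which is (\ref{Ineq:Grand}).
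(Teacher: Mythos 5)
Your strategy matches the paper's: write $N_{\mu,\nu,\lambda}>0$ via witnesses $\alpha,\beta,\gamma$, use the certificates in (III)(2) to bound $\sum_{A'}\mu+\sum_{B'}\nu+\sum_{C'}\lambda$ from above by a quantity $P$ in $\alpha,\beta,\gamma$, use (III)(3) to bound $\sum_A\mu+\sum_B\nu+\sum_C\lambda$ from below by the same $P$ via complementation, and note that the degree identities $|\mu|=|\alpha|+|\beta|$ etc.\ make the $\alpha,\beta,\gamma$ contributions cancel. The upper-bound half of your argument is correct and complete, including the borderline cardinality-$n$ case.

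The lower-bound half, however, contains a genuine gap, and it is precisely where the real work of the theorem lies. You correctly reduce to producing a Horn certificate for $c^\mu_{\alpha,\beta}$ whose indexing set for $\mu$ is (essentially) $A^c$ and whose other two sets avoid $B_1$ and $C_2$ respectively; but you then say you ``expect this to be a genuine combinatorial lemma'' and ``grant'' it. That lemma is not automatic, and the theorem does not follow without it. Two smaller problems compound this. First, your stated cardinality requirement $|I|=|J|=|\overline{A}|=n-|A|$ with $I,J\subseteq[n]$ is not what one can actually extract from $c^{\tau(A)}_{\tau(B_1),\tau(C_2)}>0$: since $|A|=|B_1|+|C_2|$, the sets in this LR coefficient have unequal sizes, and the complementation must be done after padding the ambient $[n]$ to $[n']$ for a suitable $n'>n$, which changes the common cardinality to $n-\min(|B'|,|C'|)$, not $n-|A|$. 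Second, and more fundamentally, the passage from $c^{\tau(A)}_{\tau(B_1),\tau(C_2)}>0$ to positivity of the complementary, padded LR coefficient is a nontrivial identity. The paper isolates it as Lemma~\ref{lemma:reformulate}, whose proof translates the statement into partition language, applies the transpose symmetry $c^{\gamma}_{\alpha,\beta}=c^{\gamma'}_{\alpha',\beta'}$, and then invokes the Briand--Orellana--Rosas rectangular complementation theorem (Theorem~\ref{lemma:boxsymmetry}). Your parenthetical guesses (``directly on LR fillings'' or ``zero-padding and complementation symmetries'') point vaguely toward the second route, but without the BOR theorem --- or an equivalent statement about complements of LR triples in rectangles --- the needed certificate is not produced, and the chain of inequalities does not close. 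In short: right architecture, but the keystone lemma is asserted rather than proved.
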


We prove Theorem~\ref{thm:intro} in Section~\ref{sec:2}.  Another necessary condition for 
$N_{\mu,\nu,\lambda}>0$ is a parity requirement \cite[Lemma~2.2]{GOY}:
\begin{equation}
\label{eqn:parity}
|\mu|+|\nu|+|\lambda| \equiv 0  \ (\!\!\!\!\! \mod 2).
\end{equation}

Let ${\mathcal G}_n$ be the tuples $(A,A',B,B',C,C')$ satisfying (I)--(III).
We believe that (\ref{Ineq:Grand}) combined with (\ref{eqn:parity}) provides a classification.
  
\begin{conjecture}
\label{thm:Classification}
If $(\mu,\nu,\lambda)\in {\sf Par}_n^3$ satisfies (\ref{eqn:parity}), and (\ref{Ineq:Grand}) holds for every $(A,A',B,B',C,C')\in \mathcal{G}_n$, then
$N_{\mu,\nu,\lambda}>0$.\footnote{The ``saturated version'' of this conjecture is now
\cite[Theorem~1.5]{NLIII}.}
\end{conjecture}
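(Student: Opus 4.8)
We sketch a strategy toward Conjecture~\ref{thm:Classification}. Let
\[
\mathcal{C}_{\sf NL}=\{(\mu,\nu,\lambda)\in {\sf Par}_n^3:\ N_{t\mu,t\nu,t\lambda}>0\ \text{for some}\ t\in {\mathbb Z}_{>0}\}
\]
be the \emph{Newell--Littlewood cone}, and let $\mathcal{P}_n\subseteq ({\mathbb R}_{\geq0}^n)^3$ be the rational polyhedral cone cut out by the inequalities (\ref{Ineq:Grand}) over all $(A,A',B,B',C,C')\in\mathcal{G}_n$. Theorem~\ref{thm:intro} already gives $\mathcal{C}_{\sf NL}\subseteq\mathcal{P}_n$. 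Thus the conjecture follows from two statements: \textbf{(a)} (\emph{saturation up to parity}) if $(\mu,\nu,\lambda)\in\mathcal{C}_{\sf NL}$ and (\ref{eqn:parity}) holds, then $N_{\mu,\nu,\lambda}>0$; and \textbf{(b)} (\emph{facets}) $\mathcal{P}_n\subseteq\mathcal{C}_{\sf NL}$, i.e.\ every facet-defining inequality of $\mathcal{C}_{\sf NL}$ is implied by the extended Horn inequalities. The plan is organized around proving (a) and (b).

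For (b), the starting observation is that, by (\ref{eqn:Newell-Littlewood}) and the type-$A$ saturation theorem \cite{KT}, $\mathcal{C}_{\sf NL}$ is exactly the image of the projection $(\alpha,\beta,\gamma,\mu,\nu,\lambda)\mapsto(\mu,\nu,\lambda)$ restricted to the rational polyhedral cone
\[
\mathcal{H}_n=\{(\alpha,\beta,\gamma,\mu,\nu,\lambda)\in ({\mathbb R}_{\geq0}^n)^6:\ (\alpha,\beta,\mu),\ (\alpha,\gamma,\nu),\ (\beta,\gamma,\lambda)\in {\sf LR}_n\},
\]
where ${\sf LR}_n$ is the Littlewood--Richardson cone, cut out by the Horn inequalities (\ref{eq:ineq}) together with the size condition. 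I would compute this projection by eliminating the internal variables $\alpha,\beta,\gamma$ via Fourier--Motzkin, exploiting the recursive structure of Theorem~\ref{thm:classicalHorn}: a facet inequality of $\mathcal{C}_{\sf NL}$ should assemble from ``outer'' subset data $(A,A',B,B',C,C')$ recording how $\mu,\nu,\lambda$ pair against the eliminated pieces, together with ``inner'' subset data $(A_1,A_2,B_1,B_2,C_1,C_2)$ recording the Horn conditions on the LR coefficients $c^{\alpha}_{\bullet,\bullet}$, $c^{\beta}_{\bullet,\bullet}$, $c^{\gamma}_{\bullet,\bullet}$. The disjointness (I), the cardinality balance (II) (which encodes $|\mu|=|\alpha|+|\beta|$, $|\nu|=|\alpha|+|\gamma|$, $|\lambda|=|\beta|+|\gamma|$), and the Horn-positivity requirements (III)(2)--(3) are precisely what such an elimination returns; the combinatorial heart of (b) is to verify that this produces \emph{all} facets and \emph{only} valid inequalities (the latter reproving part of Theorem~\ref{thm:intro}). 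An alternative route is to identify $\mathcal{C}_{\sf NL}$ with the stable limit $\bigcup_N$ of the pullbacks of ${\sf SatTensor}(\mathfrak{so}(N))$ (equivalently $\mathfrak{sp}(2N)$) to ${\sf Par}_n^3$ and to stabilize the Belkale--Kumar/Ressayre description \cite{BK} of those cones; the two descriptions should coincide.

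For (a), the point is subtle because saturation genuinely fails for types $B,C,D$ \cite{KM}, so the claim is that in the stable limit the \emph{only} surviving integral obstruction is the parity (\ref{eqn:parity}); equivalently, the monoid $\{(\mu,\nu,\lambda):N_{\mu,\nu,\lambda}>0\}$ equals $\mathcal{C}_{\sf NL}$ intersected with the index-$2$ sublattice of $({\mathbb Z}^n)^3$ defined by (\ref{eqn:parity}), and parity measures exactly the defect. I would attempt (a) by a Hilbert-basis/semigroup argument inside $\mathcal{H}_n\cap{\mathbb Z}^{6n}$: given $(\mu,\nu,\lambda)\in\mathcal{C}_{\sf NL}$, choose rational $\alpha,\beta,\gamma$ in the fiber, so that by \cite{KT} a common stretch makes all three LR coefficients positive, and then clear denominators simultaneously, doubling only when forced; since $(2\mu,2\nu,2\lambda)$ always lies in the monoid, the obstruction to descending to $(\mu,\nu,\lambda)$ is precisely the lattice-index defect detected by (\ref{eqn:parity}). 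A useful reformulation: $t\mapsto N_{t\mu,t\nu,t\lambda}$ is a quasi-polynomial, nonzero for large $t$ exactly when $(\mu,\nu,\lambda)\in\mathcal{C}_{\sf NL}$, with period dividing $2$, and (a) asserts it is already nonzero at $t=1$ whenever (\ref{eqn:parity}) holds --- an Ehrhart-positivity-flavored statement that one might also approach via an explicit ``triple-hive'' model for $N_{\mu,\nu,\lambda}$ (three LR hives glued along shared boundary edges), showing that integrality of the outer boundary plus (\ref{eqn:parity}) forces the interior entries into $\tfrac12{\mathbb Z}$ and then into ${\mathbb Z}$.

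The main obstacle is split between the two halves, and I expect (a) and (b) to be comparably hard. Part (b) is the Newell--Littlewood analogue of the difficult direction of Horn's conjecture; if the direct Fourier--Motzkin computation proves unwieldy, one needs the geometric invariant theory of Belkale--Kumar and Ressayre \cite{BK,Ress} adapted to the symmetric spaces $GL/O$ and $GL/Sp$, and matching the resulting inequalities to the clean conditions (I)--(III) of Definition~\ref{def:main} is itself nontrivial bookkeeping. Part (a) is delicate precisely because it must use the stable range essentially --- $n$ internal boxes can always be ``spread out'' --- while simultaneously clearing denominators across three coupled LR conditions; this simultaneous descent is where I would most expect to get stuck. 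Reasonable intermediate targets: verify $\mathcal{P}_n=\mathcal{C}_{\sf NL}$ and the sufficiency of parity by computer for small $n$; prove (b) for the special subfamilies of Proposition~\ref{prop:specificineq}, where the elimination is tractable; and establish (a) first under the stronger hypothesis that $(\mu,\nu,\lambda)$ is divisible by $2$, isolating the genuine parity step.
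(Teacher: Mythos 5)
There is no proof to compare against: the statement you are addressing is stated in the paper as a \emph{conjecture}, and the paper offers no proof of it --- only the necessity direction (Theorem~\ref{thm:intro}), exhaustive computer checks for small $n$ and small $|\mu|,|\nu|,|\lambda|$, the $n=2$ case, and the Pieri case (Theorem~\ref{thm:rowcol}). Your submission is likewise not a proof but a research plan, and you should be explicit that both of its halves are open. Your part (a) is precisely the Newell--Littlewood saturation conjecture \eqref{eqn:NLsat} from \cite{GOY}, which the paper records as unproven; your part (b) --- that the cone cut out by the extended Horn inequalities \eqref{Ineq:Grand} is contained in the saturated Newell--Littlewood cone --- is the essential content of the conjecture itself, the analogue of the hard (Klyachko) direction of Horn's problem. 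The paper even remarks that, unlike the type-$A$ situation of \cite{KT}, it has no argument deducing Conjecture~\ref{thm:Classification} from saturation; your decomposition makes the missing ingredient precise (it is exactly (b)), but it does not supply it.

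Within the plan, the one step that is genuinely solid is the identification of the saturated cone with the projection of the triple-LR cone: by \eqref{eqn:Newell-Littlewood} and type-$A$ saturation \cite{KT}, $(\mu,\nu,\lambda)$ lies in your $\mathcal{C}_{\sf NL}$ if and only if there exist rational $\alpha,\beta,\gamma$ with $(\alpha,\beta,\mu)$, $(\alpha,\gamma,\nu)$, $(\beta,\gamma,\lambda)$ in the LR cone, so Fourier--Motzkin elimination does in principle produce a finite list of defining inequalities. But the claim that this elimination returns exactly the data (I)--(III) of Definition~\ref{def:main}, and nothing more, is asserted rather than argued; nothing in the paper (nor in your sketch) rules out facets of the projected cone that are not of extended Horn type. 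Similarly, in part (a) the assertions that the quasi-polynomial $t\mapsto N_{t\mu,t\nu,t\lambda}$ has period dividing $2$, and that a ``triple-hive'' gluing forces interior integrality from \eqref{eqn:parity}, are themselves unproven statements at least as strong as what they are meant to establish. As a roadmap the proposal is reasonable and consistent with what the paper proves and conjectures, but it should not be presented as a proof: both (a) and (b) remain open, and the paper's own evidence for the statement is the computational verification together with the special cases above.
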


This is exhaustively computer-checked, with D.~Brewster's assistance, for up to $n\leq 4$ and $|\mu|,|\nu|,|\lambda|\leq 20$, for $n=5$ and $|\mu|,|\nu|,|\lambda|\leq 16$, and for $n=6$ and $|\mu|,|\nu|,|\lambda|\leq 12$. 

Since the extended Horn inequalities are homogeneous in $\mu_i,\nu_j,\lambda_k$, Conjecture~\ref{thm:Classification}
immediately implies the \emph{Newell-Littlewood saturation conjecture} \cite[Conjecture~5.4]{GOY}:
\begin{equation}
\label{eqn:NLsat}
\text{If $(\mu,\nu,\lambda)\in {\sf Par}_n^3$ and (\ref{eqn:parity}) holds, then $N_{t\mu,t\nu,t\lambda}>0\Rightarrow
N_{\mu,\nu,\lambda}>0$,}
\end{equation} 
where $t\in {\mathbb Z}_{>0}$ and $t\mu=(t\mu_1,t\mu_2,\ldots)\in {\sf Par}_n$. However, unlike
the situation of \cite{KT}, we have no proof that (\ref{eqn:NLsat}) $\Rightarrow$ Conjecture~\ref{thm:Classification}.\footnote{The proof of this direction is now \cite[Corollary~10.5]{NLIII}.}

The Newell-Littlewood numbers enjoy a symmetry (\cite[Lemma~2.2(I)]{GOY}), namely, that 
\begin{equation}
\label{eqn:S3symmetry}
N_{\mu^{(1)},\mu^{(2)},\mu^{(3)}}=
N_{\mu^{(\sigma(1))},\mu^{(\sigma(2))},\mu^{(\sigma(3))}}, \text{\ for any $\sigma\in {\mathfrak S}_3$}.
\end{equation}
By construction, the extended Horn inequalities respect this ${\mathfrak S}_3$-symmetry. It is also evident
from the definition that ${\mathcal G}_{n}\subset {\mathcal G}_{n+1}$.

In Section~\ref{sec:3} we prove the ``Pieri case''  of Conjecture~\ref{thm:Classification}.

\begin{theorem}\label{thm:rowcol}
Conjecture~\ref{thm:Classification} is true when at least one of $\mu,\nu,\lambda$ is a row or a column.
\end{theorem}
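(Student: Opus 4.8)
The plan is to combine the necessity direction (Theorem~\ref{thm:intro}) with an explicit positivity argument in the Pieri case. By the $\mathfrak{S}_3$-symmetry~(\ref{eqn:S3symmetry}) of both $N_{\mu,\nu,\lambda}$ and the family $\mathcal{G}_n$, we may assume $\lambda$ is the row or column, so the task is: given $(\mu,\nu,\lambda)\in {\sf Par}_n^3$ with $\lambda$ a single row $(\ell,0,\ldots)$ (resp.\ a single column $(1^\ell)$) such that~(\ref{eqn:parity}) holds and every extended Horn inequality is satisfied, produce a certificate that $N_{\mu,\nu,\lambda}>0$. The natural certificate is a choice of $\alpha,\beta,\gamma$ with $c^\mu_{\alpha,\beta}c^\nu_{\alpha,\gamma}c^\lambda_{\beta,\gamma}>0$ in the sum~(\ref{eqn:Newell-Littlewood}). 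First I would unwind the definition: when $\lambda$ is a row, $c^\lambda_{\beta,\gamma}\neq 0$ forces $\beta,\gamma$ to be rows with $|\beta|+|\gamma|=\ell$, and then $c^\mu_{\alpha,\beta}$ (resp.\ $c^\nu_{\alpha,\gamma}$) nonzero is governed by the classical Pieri rule: $\mu/\alpha$ is a horizontal strip of size $|\beta|$ and $\nu/\alpha$ is a horizontal strip of size $|\gamma|$. So $N_{\mu,\nu,\lambda}>0$ reduces to the combinatorial question of whether there is a partition $\alpha$ contained in both $\mu$ and $\nu$ with $\mu/\alpha$ and $\nu/\alpha$ horizontal strips and $(|\mu|-|\alpha|)+(|\nu|-|\alpha|)=\ell$ — equivalently $|\alpha|=\tfrac{1}{2}(|\mu|+|\nu|-\ell)$, which the parity condition~(\ref{eqn:parity}) makes an integer. (For $\lambda$ a column, replace "horizontal strip" by "vertical strip" throughout, via conjugation.)

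Next I would translate the extended Horn inequalities, specialized to this $\lambda$, into constraints that pin down the existence of such an $\alpha$. The key is that a horizontal strip $\mu/\alpha$ exists of a prescribed size precisely when $\alpha$ interleaves $\mu$: $\mu_1\geq\alpha_1\geq\mu_2\geq\alpha_2\geq\cdots$. Thus I want the largest possible $\alpha$ below $\mu$ that still fits under $\nu$ as a horizontal-strip complement, and dually; the obstruction to simultaneously achieving the target size $|\alpha|$ should be exactly an extended Horn inequality failing. Concretely, set $\alpha_i=\min(\mu_{i+?},\nu_{i+?})$-type candidates and bound $|\alpha|$ from above and below; the lower bound for the max interleaving $\alpha$ inside $\mu$ is something like $|\mu|-\mu_1$, and combined with the analogous bound for $\nu$ and with simple inequalities of the form $\mu_1\le\nu_1+\ell$ (which are themselves among the~(\ref{Ineq:Grand}), by Proposition~\ref{prop:specificineq}), I expect to squeeze the attainable range of $|\alpha|$ to include $\tfrac12(|\mu|+|\nu|-\ell)$. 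I would identify the handful of tuples $(A,A',B,B',C,C')\in\mathcal{G}_n$ that produce exactly the inequalities needed — these should be low-complexity ones (e.g.\ $A'=B'=\emptyset$ and $C'$ a singleton, or all primed sets empty reducing to~(\ref{eq:ineq})) so that verifying they lie in $\mathcal{G}_n$ via condition~(III) is a short check using known positivity of Littlewood-Richardson coefficients for rectangular/near-trivial shapes.

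The main obstacle I anticipate is the sufficiency bookkeeping: showing that when \emph{all} extended Horn inequalities hold, the candidate $\alpha$ I construct genuinely interleaves \emph{both} $\mu$ and $\nu$ with the correct total size, rather than merely one of them. This is where one must be careful that the two horizontal-strip conditions are compatible — a priori the maximal horizontal-strip-complement inside $\mu$ and the one inside $\nu$ could be forced to different partitions. I would handle this by a greedy/exchange argument: start from $\alpha_i=\min(\mu_i,\nu_i)$ (which interleaves neither in general) and repair it row by row from the bottom, at each step either decreasing some $\alpha_i$ to restore interleaving or arguing that a decrease is impossible because it would contradict one of the extended Horn inequalities; the total size lost in the repair is controlled by $\mu_1$ and $\nu_1$, hence by the simple inequalities above. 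A clean way to organize this is to phrase everything in terms of the partition $\delta$ with $\delta_i=\max(\mu_i,\nu_i)$ and $\epsilon_i=\min(\mu_i,\nu_i)$ and note that $N_{\mu,\nu,\lambda}>0$ iff $\ell\geq |\delta|-|\epsilon|$ and $\ell\le |\mu|+|\nu|$ and the parity holds — then check that the first inequality is precisely (a consequence of) the extended Horn inequalities with $C=\{1\}$, $C'=\emptyset$ and $A,A'$ (resp.\ $B,B'$) chosen to select the rows where $\mu$ exceeds $\nu$ (resp.\ where $\nu$ exceeds $\mu$), whose membership in $\mathcal{G}_n$ follows from~(III) with all the auxiliary Littlewood-Richardson coefficients being those of one-row partitions.
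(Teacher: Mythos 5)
Your high-level plan — reduce to the Pieri criterion of \cite[Proposition~2.4]{GOY}, produce an $\alpha\subseteq\mu\wedge\nu$ of size $k=\tfrac12(|\mu|+|\nu|-\ell)$ with $\mu/\alpha$ and $\nu/\alpha$ horizontal (resp.\ vertical) strips, and verify the needed bounds via extended Horn inequalities — is the same strategy the paper uses, and the pieces you correctly identify (parity gives $k\in\mathbb Z$, a triangle inequality gives $k\geq 0$, Proposition~\ref{prop:specificineq}(v) gives $k\leq|\mu\wedge\nu|$, the two-variable Horn inequalities give the horizontal-strip property of $\mu/(\mu\wedge\nu)$ and $\nu/(\mu\wedge\nu)$) are all in the paper. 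But there is a genuine gap at the crux of the argument.

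The ``clean'' characterization you propose — $N_{\mu,\nu,(\ell)}>0$ iff $\ell\geq|\delta|-|\epsilon|$, $\ell\leq|\mu|+|\nu|$, and parity — is false. Take $\mu=\nu=(2,1)$ and $\lambda=(6)$: then $\delta=\epsilon=(2,1)$, so $|\delta|-|\epsilon|=0\leq 6$, $|\mu|+|\nu|=6\geq 6$, and $|\mu|+|\nu|+|\lambda|=12$ is even; yet $N_{(2,1),(2,1),(6)}=c^{(6)}_{(2,1),(2,1)}=0$. Note that here $\mu/(\mu\wedge\nu)$ and $\nu/(\mu\wedge\nu)$ are empty, hence horizontal strips, so adding that hypothesis does not save the iff. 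What is missing is the paper's Claim: under the extended Horn inequalities there exist at least $|\mu\wedge\nu|-k$ columns $i$ with $\mu'_i=\nu'_i>0$ (resp.\ rows $i$ with $\mu_i=\nu_i>0$ in the column case). Only then can one peel $|\mu\wedge\nu|-k$ boxes off $\mu\wedge\nu$ — one from the bottom of each such column — to get $\alpha$ while keeping both $\mu/\alpha$ and $\nu/\alpha$ horizontal strips. Your ``greedy repair'' is aimed at the wrong obstruction: once the Horn inequalities hold, $\mu\wedge\nu$ already interleaves both $\mu$ and $\nu$, so there is nothing to ``restore''; the real difficulty is shrinking $\mu\wedge\nu$ to size $k$ without creating two boxes in a single column of either skew shape, and this is exactly what the column-counting Claim controls.

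Relatedly, your guess that the needed inequalities are ``low-complexity'' ones (e.g.\ $A'=B'=\emptyset$, $C'$ a singleton, or $C=\{1\}$, $C'=\emptyset$) is off. For instance, $(A,A',B,B',C,C')=(Y,X,X,Y,\{1\},\emptyset)$ does not satisfy condition (II), since (II) would force $|C|=|A'|+|B'|=n$. The inequality actually used to prove the Claim in the row case has all six sets nonempty in general, namely $(A,A',B,B',C,C')=(Y,X,X\cup\{1\},Y\setminus\{1\},[n]\setminus\{1\},\{1\})$, and the verification that this lies in $\mathcal G_n$ requires explicit choices of $(A_1,\ldots,C_2)$ with nontrivial Littlewood--Richardson positivity checks; the column case further splits into $L<p$ and $L\geq p$ and uses yet another tuple. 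Without identifying this Claim and the specific member of $\mathcal G_n$ proving it, the sufficiency direction does not go through.
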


In contrast with \cite{BK, Ress}, our methods are completely combinatorial, starting from (\ref{eqn:Newell-Littlewood}). 
The main work was the uncovering of the form of the inequalities (\ref{Ineq:Grand}).

\section{Proof of Theorem~\ref{thm:intro}, subfamilies, and stability}\label{sec:2}

\subsection{Proof of Theorem~\ref{thm:intro}:} 
We need this result of E.~Briand-R.~Orellana-M.~Rosas:

\begin{theorem}\label{lemma:boxsymmetry}\cite[Theorem~4]{BOR}
	For any partition $\lambda,\mu$ and $\nu$ such that $\lambda \subseteq (n^{k+l}),\mu\subseteq (n^k)$ and $\nu\subseteq (n^l)$, 
\[c^{\lambda}_{\mu,\nu} = c^{\lambda^{\vee(n^{k+l})}}_{\mu^{\vee(n^k)},\nu^{\vee(n^l)}},\]
where if $\theta \subseteq (n^m)$, $\theta^{\vee(n^m)}$ is the partition obtained by taking the complement of $\theta\subseteq n\times m$ and rotating $180$-degrees.  
\end{theorem}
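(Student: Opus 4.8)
The plan is to reduce the statement, via the transpose symmetry of Littlewood--Richardson coefficients, to an elementary manipulation of $\operatorname{Hom}$-spaces for the single group $GL_n(\complexes)$. (A bijective proof on Littlewood--Richardson skew tableaux is also possible; the route below is shorter.) Write $\theta'$ for the conjugate (transpose) partition. I will use two standard facts: first, $c^{\lambda}_{\mu,\nu}=c^{\lambda'}_{\mu',\nu'}$, since $s_\theta\mapsto s_{\theta'}$ is a ring automorphism of the ring of symmetric functions; second, a $180^{\circ}$ rotation and transposition of a Young diagram commute, so $\bigl(\theta^{\vee(n^m)}\bigr)'=(\theta')^{\vee(m^n)}$ whenever $\theta\subseteq(n^m)$.

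Applying the first fact to both sides of the asserted identity and the second fact to each of $\mu,\nu,\lambda$, it suffices to prove: for partitions $P\subseteq(k^n)$, $Q\subseteq(l^n)$, and $R\subseteq((k{+}l)^n)$,
\[
c^{R}_{P,Q}\;=\;c^{R^{\vee((k+l)^n)}}_{P^{\vee(k^n)},\,Q^{\vee(l^n)}}.
\]
The point of the reformulation is that $P,Q,R$ now all have at most $n$ parts, hence index polynomial irreducible representations $V_P,V_Q,V_R$ of $GL_n(\complexes)$, with $c^{R}_{P,Q}=\dim\operatorname{Hom}_{GL_n}(V_R,\,V_P\otimes V_Q)$ and likewise for the right-hand side.

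The key observation is the meaning of the rectangular complement in this range: if $\theta$ has at most $n$ parts and $\theta\subseteq(m^n)$, then $\theta^{\vee(m^n)}=(m-\theta_n,\dots,m-\theta_1)$ is exactly the highest weight of $V_\theta^{\,\ast}\otimes(\det)^{\otimes m}$, i.e.\ $V_{\theta^{\vee(m^n)}}\cong V_\theta^{\,\ast}\otimes(\det)^{\otimes m}$ as $GL_n(\complexes)$-modules. Using this for $R$ (with $m=k{+}l$) and for $P,Q$ (with $m=k,l$),
\[
c^{R^{\vee}}_{P^{\vee},Q^{\vee}}=\dim\operatorname{Hom}_{GL_n}\!\bigl(V_R^{\,\ast}\otimes(\det)^{\otimes(k+l)},\ V_P^{\,\ast}\otimes(\det)^{\otimes k}\otimes V_Q^{\,\ast}\otimes(\det)^{\otimes l}\bigr).
\]
Since $(\det)^{\otimes k}\otimes(\det)^{\otimes l}=(\det)^{\otimes(k+l)}$, the determinant twists cancel, so this equals $\dim\operatorname{Hom}_{GL_n}\!\bigl(V_R^{\,\ast},(V_P\otimes V_Q)^{\ast}\bigr)=\dim\operatorname{Hom}_{GL_n}(V_P\otimes V_Q,V_R)=\dim\operatorname{Hom}_{GL_n}(V_R,V_P\otimes V_Q)=c^{R}_{P,Q}$, using $\dim\operatorname{Hom}(A^{\ast},B^{\ast})=\dim\operatorname{Hom}(B,A)$ and $\dim\operatorname{Hom}(A,B)=\dim\operatorname{Hom}(B,A)$ for finite-dimensional representations in characteristic zero. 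This proves the identity.

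I expect the only genuine obstacle to be the first step. One cannot apply the dual-twist dictionary directly to $\mu\subseteq(n^k)$, $\nu\subseteq(n^l)$, $\lambda\subseteq(n^{k+l})$, because the three rectangular complements are taken in boxes with $k$, $l$, and $k{+}l$ rows, referring to three different groups $GL_k$, $GL_l$, $GL_{k+l}$ whose $\operatorname{Hom}$-spaces are not comparable. Transposing first converts ``different numbers of rows'' into ``all at most $n$ parts'', which is precisely what places all three representations inside $GL_n(\complexes)$; everything afterward is determinant bookkeeping, consistent with the automatic degree identity $|R^{\vee}|=n(k{+}l)-|R|=\bigl(nk-|P|\bigr)+\bigl(nl-|Q|\bigr)=|P^{\vee}|+|Q^{\vee}|$.
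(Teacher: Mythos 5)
The paper cites this identity as \cite[Theorem~4]{BOR} and does not supply a proof, so there is no in-paper argument to compare against; I will simply assess your proof.

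Your argument is correct, and it is the standard representation-theoretic proof of rectangular complement symmetry. Each step checks out: the transpose symmetry $c^{\lambda}_{\mu,\nu}=c^{\lambda'}_{\mu',\nu'}$ is standard; the commutation of transpose with $180^\circ$-rotation-of-complement, $(\theta^{\vee(n^m)})'=(\theta')^{\vee(m^n)}$, is an elementary Young-diagram fact; and for a partition $\theta$ with at most $n$ parts contained in $(m^n)$ the identification $V_{\theta^{\vee(m^n)}}\cong V_\theta^{\,\ast}\otimes(\det)^{\otimes m}$ as $GL_n(\complexes)$-modules is the basic highest-weight computation. Once the determinant twists cancel, the chain of $\dim\operatorname{Hom}$ identities (duals, symmetry of $\dim\operatorname{Hom}$ in a semisimple category) is routine. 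You also correctly identified the one non-obvious maneuver: the complements in the hypothesis are taken inside rectangles of \emph{different} heights $k$, $l$, $k+l$, so the dual-twist dictionary cannot be applied directly; transposing first uniformizes the number of parts to ``at most $n$'', placing all three modules inside a single $GL_n$, after which everything is bookkeeping. This gives a clean, self-contained proof of a result the paper takes as a black box; BOR's own proof is by symmetric-function/combinatorial methods, so yours is a legitimately different (and arguably more transparent) route, at the cost of importing $GL_n$ representation theory rather than staying purely combinatorial.
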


We use the following reformulation of the main definition.

\begin{lemma}\label{lemma:reformulate}
In Definition~\ref{def:main}, it is equivalent to replace 
Condition (III)(3) with
\begin{itemize} 
\item[(III)(3)'] $m_A := \min(|B'|, |C'|)$, $m_B := \min(|A'|, |C'|)$, $m_C := \min(|A'|, |B'|)$
\[0< c^{\tau(A^c\cup [n+1,n+|A|-m_A])}_{\tau(B_1^c\cup [n+1,n+|B_1|-m_A]),\tau(C_2^c\cup [n+1,n+|C_2|-m_A])},\]
\[0< c^{\tau(B^c\cup [n+1,n+|B|-m_B])}_{\tau(C_1^c\cup [n+1,n+|C_1|-m_B]),\tau(A_2^c\cup [n+1,n+|A_2|-m_B])},\]
\[0< c^{\tau(C^c\cup [n+1,n+|C|-m_C])}_{\tau(A_1^c\cup [n+1,n+|A_1|-m_C]),\tau(B_2^c\cup [n+1,n+|B_2|-m_C])}.\]
(Above $A^c,B^c,C^c\subseteq [n]$.)
\end{itemize}
\end{lemma}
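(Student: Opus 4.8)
The plan is to show that Condition (III)(3) and the alternative Condition (III)(3)$'$ are literally the same set of three inequalities, rewritten via the complementation symmetry of Littlewood--Richardson coefficients (Theorem~\ref{lemma:boxsymmetry}). The essential observation is that for a subset $I \subseteq [n]$ of size $d$, the partition $\tau(I)$ lives in the $d \times (n-d)$ rectangle, and its complement in that rectangle — rotated $180^\circ$ — is exactly $\tau(I^c)$, where $I^c = [n] \setminus I$ has size $n - d$. More generally, if one first pads $\tau(I)$ by allowing entries up to some larger rectangle $d \times m$ with $m \ge n - d$, the complement-and-rotate operation produces $\tau\bigl(I^c \cup [n+1, n+ (m - (n-d))]\bigr)$, i.e.\ one appends the top consecutive values $n+1, n+2, \ldots$ to $I^c$ to bring its size up to $m$. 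So I would first record this as a short combinatorial lemma: $\tau(I)^{\vee(d\times m)} = \tau\bigl(I^c \cup [n+1, \dots, n+m-(n-d)]\bigr)$ whenever $\tau(I) \subseteq (m^d)$, i.e.\ $m \ge n-d$.

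Next I would take one of the three coefficients in (III)(3), say $c^{\tau(A)}_{\tau(B_1),\tau(C_2)}$, and apply Theorem~\ref{lemma:boxsymmetry} to it. Here $\tau(A)$ has at most $|A|$ parts, $\tau(B_1)$ has at most $|B_1|$ parts and $\tau(C_2)$ has at most $|C_2|$ parts, with $|A| = |B_1| + |C_2|$ by (II) together with (III)(1) (indeed $|B_1| = |B'|$, $|C_2| = |C'|$, $|A| = |B'| + |C'|$). To apply the box-complementation one needs a common column-length $m$ for all three partitions; the minimal valid choice, so that all three fit, is $m = \max$ of the three ambient widths, but the cleanest choice is dictated by wanting the three complements to be of the form $\tau(\,\cdot\,)$ with the fewest appended large values. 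A direct check shows that the natural width to use is $m_A := \min(|B'|,|C'|)$ shifted appropriately: the relevant rectangles have widths $n - |A|$, $n - |B_1|$, $n - |C_2|$, and after padding all three to the common width $n - m_A$ (note $n-|A| = n - |B'| - |C'| \le n - |B'| = n-|B_1|$ and similarly for $C_2$, so $n-|A|$ is the smallest and $\min(n-|B_1|, n-|C_2|) = n - \max(|B_1|,|C_2|) = n - \max(|B'|,|C'|)$), one pads $\tau(A)$ by $|A| - m_A$ large values and $\tau(B_1), \tau(C_2)$ by $|B_1| - m_A, |C_2| - m_A$ large values respectively — precisely matching the superscripts and subscripts in (III)(3)$'$. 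Applying the first lemma then turns $c^{\tau(A)}_{\tau(B_1),\tau(C_2)} > 0$ into exactly the first displayed inequality of (III)(3)$'$, and the same argument on the other two coefficients gives the other two displays. Since Theorem~\ref{lemma:boxsymmetry} is an equality (not just an implication), this chain of equivalences is reversible, so (III)(3) $\Leftrightarrow$ (III)(3)$'$ under (I), (II), (III)(1)--(2), which is what we must prove.

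The one place that needs care — and the step I expect to be the main obstacle — is the bookkeeping of the common rectangle width and the index set $[n+1, n+\ell]$ of appended values. I need to verify that (a) the three partitions $\tau(A)$, $\tau(B_1)$, $\tau(C_2)$ all genuinely fit inside the chosen rectangle $\bigl((n-m_A)^{\,\cdot}\bigr)$ — this is where the inequalities $|A| \ge |B_1|, |C_2|$ (equivalently $n - |A| \le n-|B_1|, n-|C_2|$, so the smaller partitions only need \emph{upward} padding, never truncation) are used; (b) the padded partition $\tau(I) + (\text{column of height $\ell$ of width }n-m_A)$, complemented in the $|I| \times (n - m_A)$ box and rotated, is $\tau$ of $I^c$ with the values $n+1, \dots, n + (|I| - m_A)$ appended — this follows from the first lemma with $d = |I|$, $m = n - m_A$, since the number of appended values is $m - (n-|I|) = |I| - m_A$; and (c) the appended values, being strictly larger than everything in $I^c \subseteq [n]$, sit at the correct (top) end so that the resulting set is still an increasing sequence and $\tau$ of it is a genuine partition. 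None of these is deep, but all three must be checked simultaneously for the superscript and both subscripts, and the asymmetry among $A,B,C$ (the three $m$'s are generally distinct) means one cannot quite do it ``by symmetry'' — each of the three displays must be instantiated. Once the padding lemma is set up, though, each display is a one-line substitution.
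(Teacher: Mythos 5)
Your strategy is the same as the paper's: reduce both versions of (III)(3) to a common expression by combining the rectangular complementation symmetry of Briand--Orellana--Rosas (Theorem~\ref{lemma:boxsymmetry}) with a combinatorial identity relating $\tau(I)$ to $\tau(I^c)$, after padding all three partitions to fit a common rectangle of width $n-m_A$ (resp.\ $n-m_B$, $n-m_C$). The width computation and the count of appended indices $|A|-m_A$, $|B_1|-m_A$, $|C_2|-m_A$ both match the paper.

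There is, however, a genuine error in the ``short combinatorial lemma'' on which your whole chain rests, and it propagates. You assert that the $180^\circ$-rotated complement of $\tau(I)$ in its $d\times(n-d)$ rectangle equals $\tau(I^c)$, and more generally that $\tau(I)^{\vee(m^d)}=\tau\bigl(I^c\cup[n+1,n+m-(n-d)]\bigr)$. This cannot be right as stated: the left side is a partition with $d$ parts, while the right side has $m$ parts (and in the unpadded case $d$ vs.\ $n-d$). The correct statement is that $\tau(I^c)$ is the \emph{conjugate} of $\tau(I)^{\vee}$, i.e.\ $\tau(I^c)=(\tau(I)^{\vee})'$, as the paper notes explicitly (``$\tau(C^c)$ is in fact $\tau(C)^{\vee}$ \emph{and transposed}''). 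Consequently, applying Theorem~\ref{lemma:boxsymmetry} to $c^{\tau(A)}_{\tau(B_1),\tau(C_2)}$ produces $c$ of the three box-complements, which are the \emph{conjugates} of the three $\tau(\,\cdot^c\cup[\dots])$'s you want in (III)(3)$'$, not those partitions themselves. To close the chain you must also invoke the conjugation symmetry $c^{\gamma}_{\alpha,\beta}=c^{\gamma'}_{\alpha',\beta'}$, which the paper does and your proposal never mentions. Without it, the equality you claim between the (III)(3) coefficient and the (III)(3)$'$ coefficient does not follow. This is a fixable omission, but it is a missing step, not merely loose writing: the lemma as you state it is false, and the proof as written stops one symmetry short.
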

\begin{proof}
Notice that 
\begin{align*}
\ & \tau(C^c\cup [n+1,n+|C|-m_C])\\
=\ & \tau(C^c) \cup (n+1-(n-|C|+1))^{|C|-m_C}\\
=\ & \tau(C^c) \cup |C|^{|C|-m_C}
\end{align*}
Now, $\tau(C^c)$ is in fact $\tau(C)^\vee$ \emph{and transposed}. Thus 
\[ \tau(C^c) \cup |C|^{|C|-m_C}=(\tau(C)^{\vee}+(|C|-m_C)^{|C|})',\]
where for a partition $\alpha$, we denote $\alpha'$ to be the transpose of $\alpha$.

A similar equality holds for the other two arguments. Hence
\begin{align*}
\ & c^{\tau(C^c\cup [n+1,n+|C|-m_C])}_{\tau(A_1^c\cup [n+1,n+|A_1|-m_C]),\tau(B_2^c\cup [n+1,n+|B_2|-m_C])}\\
= \ & c^{(\tau(C)^{\vee}+|C|^{|C|-m_C})'}_{(\tau(A_1)^{\vee}+|A_1|^{|A_1|-m_C})', (\tau(B_2)^{\vee}+|B_2|^{|B_2|-m_C})'}\\
= \ & c^{\tau(C)^{\vee}+(|C|-m_C)^{|C|}}_{\tau(A_1)^{\vee}+(|A_1|-m_C)^{|A_1|}, \tau(B_2)^{\vee}+(|B_2|-m_C)^{|B_2|}},
\end{align*}
where we have used the standard symmetry $c_{\alpha,\beta}^{\gamma}=c_{\alpha',\beta'}^{\gamma'}$. 

Since 
\[\tau(C)^\vee \subseteq (n-|C|)^{|C|}, \tau(A_1)^\vee\subseteq (n-|A_1|)^{|A_1|}\text{ and }\tau(B_2)^\vee \subseteq (n-|B_2|)^{|B_2|},\]
one has
\begin{align*}
\ &\tau(C)^{\vee}+(|C|-m_C)^{|C|}\subseteq (n-m_C)^{|C|},\\
\ &\tau(A_1)^{\vee}+(|A_1|-m_C)^{|A_1|}\subseteq (n-m_C)^{|A_1|},\\
\ &\tau(B_2)^{\vee}+(|B_2|-m_C)^{|B_2|}\subseteq (n-m_C)^{|B_2|}.
\end{align*}
Observe
\begin{align*}
\ &(\tau(C)^{\vee}+(|C|-m_C)^{|C|})^{\vee((n-m_C)^{|C|})} = \tau(C),\\
\ &(\tau(A_1)^{\vee}+(|A_1|-m_C)^{|A_1|})^{\vee((n-m_C)^{|A_1|})} = \tau(A_1),\\
\ &(\tau(B_2)^{\vee}+(|C|-m_C)^{|B_2|})^{\vee((n-m_C)^{|B_2|})} = \tau(B_2).
\end{align*}
Since, by Condition (II), $|C| = |A_1|+|B_2|$, we can apply Theorem~\ref{lemma:boxsymmetry} and obtain
\[c^{\tau(C)^{\vee}+(|C|-m_C)^{|C|}}_{\tau(A_1)^{\vee}+(|A_1|-m_C)^{|A_1|}, \tau(B_2)^{\vee}+(|B_2|-m_C)^{|B_2|}}=  c^{\tau(C)}_{\tau(A_1),\tau(B_2)}.\]
The other two cases are similarly proved to be equivalent with the corresponding condition in Definition~\ref{def:main}.
\end{proof}

Let $(A,A',B,B',C,C')\in\mathcal{G}_n$, and let $(A_1,A_2,B_1,B_2,C_1,C_2)$ be as in (III).   Let $\mu,\nu,\lambda\in {\sf Par}_n$ satisfy $N_{\mu,\nu,\lambda}>0$.  By (\ref{eqn:Newell-Littlewood}), there exist $\alpha, \beta, \gamma\in {\sf Par}_n$ such that 
\[c^\mu_{\alpha, \beta}, c^\nu_{\alpha, \gamma}, c^\lambda_{\beta, \gamma} >0.\]   
By Theorem~\ref{thm:classicalHorn}, $(\mu,\alpha, \beta)$, $(\nu,\alpha,\gamma)$, and $(\lambda,\beta, \gamma)$ satisfy  (\ref{eq:ineq}).  In particular, 
\begin{equation}
\sum_{i\in A'} \mu_i \leq \sum_{i\in A_1} \beta_i + \sum_{i\in A_2}\alpha_i, \sum_{j\in B'} \nu_j \leq \sum_{j\in B_1} \alpha_j + \sum_{j\in B_2}\gamma_j, \sum_{k\in C'} \lambda_k \leq \sum_{k\in C_1} \gamma_k + \sum_{k\in C_2}\beta_k
\end{equation}

In addition, since $\mu,\alpha,\beta\in {\sf Par}_n$, and in view of Lemma~\ref{lemma:reformulate},
\begin{align*}
\sum_{i\in A} \mu_i =&\  |\mu| - \sum_{i\in A^c} \mu_i\\
 = & \ |\mu| - \sum_{i\in A^c\cup [n+1,n+|A|-m_A]} \mu_i
 \end{align*}
 \begin{align*}
  \geq & \ |\mu| - \sum_{i\in B_1^c\cup [n+1,  n+|B_1|-m_A]} \alpha_i - \sum_{i\in C_2^c\cup [n+1,  n+|C_2|-m_A]} \beta_i\\
  = & \ |\mu| - \sum_{i\in B_1^c} \alpha_i - \sum_{i\in C_2^c} \beta_i\\
 = & \ |\alpha| - \sum_{i\in B_1^c} \alpha_i + |\beta| - \sum_{i\in C_2^c} \beta_i\\
  = & \ \sum_{i\in B_1} \alpha_i + \sum_{i\in C_2} \beta_i.
\end{align*}
By the same logic, \[\sum_{j\in B} \nu_j \geq \sum_{j\in C_1} \gamma_j + \sum_{j\in A_2} \alpha_j \text{ and } \sum_{k\in C} \gamma_k \geq \sum_{k\in A_1} \beta_k + \sum_{k\in B_2} \gamma_k.\]

Therefore, 
\begin{align*}
\sum_{i\in A'} \mu_i + \sum_{j\in B'} \nu_j + \sum_{k\in C'} \lambda_k \leq &\  (\sum_{i\in A_1} \beta_i + \sum_{i\in A_2}\alpha_i) + (\sum_{j\in B_1} \alpha_j + \sum_{j\in B_2}\gamma_j) + (\sum_{k\in C_1} \gamma_k + \sum_{k\in C_2}\beta_k)\\
= &\  (\sum_{i\in B_1} \alpha_i + \sum_{i\in C_2}\beta_i) + (\sum_{j\in C_1} \gamma_j + \sum_{j\in A_2}\alpha_j) + (\sum_{k\in A_1} \beta_k + \sum_{k\in B_2}\gamma_k)\\
\leq &\ \sum_{i\in A} \mu_i + \sum_{j\in B} \nu_j + \sum_{k\in C} \lambda_k. \end{align*}
\qed

\begin{Remark} 
Using the above argument, one can show that other inequalities of the form (\ref{Ineq:Grand}) hold. For example,
we can replace (III) by (III)(3)' and replace (II) by
\begin{itemize}
\item[(II)'] $|A| \geq  \max(|B'|,|C'|),|B| \geq \max(|A'|,|C'|), |C| \geq  \max(|A'|,|B'|)$.
\end{itemize}
Any $(\mu,\nu,\lambda)\in {\sf Par}_n^3$ such that $N_{\mu,\nu,\lambda}>0$ satisfies the corresponding inequality.
\end{Remark}

\subsection{Special subclasses of the inequalities} In \cite{GOY} we proved:
\begin{theorem}[Extended Weyl inequalities]
\label{thm:Horn}
Let $(\mu,\nu,\lambda)\in {\sf Par}_n^3$ and  
$1\leq k \leq i < j \leq l \leq n$, let $m = \min(i-k,l-j)$ and $M = \max(i-k,l-j)$.  If $N_{\mu,\nu,\lambda}>0$ then
\begin{equation}
\label{ijklInequality}
\mu_i - \mu_j \leq \lambda_k - \lambda_l + \nu_{m-p+1} + \nu_{M+p+2}, \text{\ \ \ where $0\leq p\leq m$}.
\end{equation}
\end{theorem}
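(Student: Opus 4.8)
The plan is to derive the extended Weyl inequality (\ref{ijklInequality}) as an instance of the extended Horn inequality (\ref{Ineq:Grand}), by exhibiting an explicit tuple $(A,A',B,B',C,C')\in\mathcal{G}_n$ whose associated inequality, after moving terms across, becomes exactly the claimed bound. Since Theorem~\ref{thm:intro} already gives (\ref{Ineq:Grand}) whenever $N_{\mu,\nu,\lambda}>0$, it suffices to verify that our candidate tuple satisfies (I), (II), and (III) (using the reformulation (III)(3)$'$ from Lemma~\ref{lemma:reformulate} wherever it is more convenient).

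First I would read off what the sets must be from the shape of (\ref{ijklInequality}). Rewriting it as
\[
0 \leq -\mu_i + \mu_j + \nu_{m-p+1} + \nu_{M+p+2} + \lambda_k - \lambda_l,
\]
the natural choice is $A=\{j\}$, $A'=\{i\}$ on the $\mu$-side; $C=\{k\}$, $C'=\{l\}$ on the $\lambda$-side; and $B=\{m-p+1,\,M+p+2\}$, $B'=\emptyset$ on the $\nu$-side (so there are no subtracted $\nu$-terms). Then $|A'|=|C'|=1$, $|B'|=0$, and (II) demands $|A|=|B'|+|C'|=1$, $|B|=|A'|+|C'|=2$, $|C|=|A'|+|B'|=1$, which matches. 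Condition (I) is immediate provided $i\neq j$ and $k\neq l$, which holds since $i<j$ and $k\leq i<j\leq l$ force $k<l$; one should also check $m-p+1<M+p+2$, which holds because $p\geq 0$ and $M\geq m$. For (III), because $|A'|=|C'|=1$ the auxiliary sets $A_1,A_2$ and $C_1,C_2$ are forced to be singletons, and $B_1=B_2=\emptyset$ since $|B'|=0$; the condition (III)(2) then just says $c^{\tau(A')}_{\tau(A_1),\tau(A_2)}>0$, etc., i.e.\ $\tau(A')=\tau(A_1)+\tau(A_2)$ (Littlewood-Richardson with two one-part shapes), which pins $A_1,A_2$ down in terms of $i$, and likewise for $C$. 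The crux is condition (III)(3): two of its three requirements involve $B_1$ or $B_2$, hence (since these are empty) reduce to Littlewood-Richardson coefficients with a single nonempty argument, namely $c^{\tau(A)}_{\tau(C_2)}$-type statements forcing $A$ and $C_2$ to determine each other; the genuinely substantive one is $c^{\tau(B)}_{\tau(C_1),\tau(A_2)}>0$, which becomes a Pieri-type positivity statement $c^{\tau(B)}_{\tau(C_1),\tau(A_2)}>0$ with $|B|=2$.

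The main obstacle I anticipate is precisely this last positivity check: one must show that for the prescribed singletons $C_1$ (coming from $k,l$) and $A_2$ (coming from $k,i$), and the prescribed two-element set $B=\{m-p+1,M+p+2\}$, the coefficient $c^{\tau(B)}_{\tau(C_1),\tau(A_2)}$ is nonzero. Concretely, $\tau$ of a two-element set $\{a<b\}$ is the partition $(b-2,a-1)$, and $\tau$ of a singleton $\{c\}$ is the one-row partition $(c-1)$; so the requirement is a Pieri rule instance: $(b-2,a-1)$ must be obtainable by adding a horizontal strip of size $|A_2|-?$... more carefully, it asks that the skew shape $\tau(B)/\tau(C_1)$ (or $/\tau(A_2)$) be a horizontal strip of the correct size. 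I would verify this by direct computation with $a=m-p+1$, $b=M+p+2$ and the values of $k,i,l$, using $m=\min(i-k,l-j)$, $M=\max(i-k,l-j)$; the inequalities $0\le p\le m$ and $k\le i<j\le l$ are exactly what make the strip condition hold, and I expect the two cases $i-k\le l-j$ versus $i-k>l-j$ (which swap the roles of $C_1$ and $A_2$) to need to be handled separately but symmetrically.

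Once the tuple is shown to lie in $\mathcal{G}_n$, the inequality (\ref{Ineq:Grand}) for it reads $0\le \mu_j-\mu_i+\nu_{m-p+1}+\nu_{M+p+2}+\lambda_k-\lambda_l$, which rearranges to (\ref{ijklInequality}); invoking Theorem~\ref{thm:intro} completes the proof. As a sanity check I would confirm the degenerate sub-case $p=m$, $M=m$ (i.e.\ $i-k=l-j$), where the two $\nu$-indices collide into adjacent values, against the known extended Weyl inequalities of \cite{GOY}, and also recover the classical Weyl inequality $\mu_i\le\lambda_k+\nu_{?}$ type statement when one of the partitions is zero, to make sure no off-by-one error has crept into the definition of $\tau$ or the index shifts $m-p+1$, $M+p+2$.
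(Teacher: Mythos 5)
Your proposal is correct, and it matches the argument that appears in this paper---but with one caveat about where that argument lives.  Theorem~\ref{thm:Horn} is not reproved here; it is cited from \cite{GOY}, where it was established directly.  What \emph{this} paper does is show (in the proof of Proposition~\ref{prop:specificineq}(ii)) that the extended Weyl inequalities are instances of the extended Horn inequalities, which, combined with Theorem~\ref{thm:intro}, yields exactly the alternative proof you describe.  Your choice of sets $(A,A',B,B',C,C')=(\{j\},\{i\},\{m-p+1,M+p+2\},\emptyset,\{k\},\{l\})$ and the forced auxiliary tuple $(A_1,A_2,B_1,B_2,C_1,C_2)=(\{k\},\{i-k+1\},\emptyset,\emptyset,\{l-j+1\},\{j\})$ coincide with the paper's, and the one nontrivial positivity, $c^{\tau(B)}_{\tau(C_1),\tau(A_2)}=c^{(M+p,m-p)}_{(l-j),(i-k)}>0$, is indeed the Pieri instance you identify; the two cases $i-k\ge l-j$ and $i-k<l-j$ both give a valid horizontal strip precisely because $0\le p\le m$.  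So your route and the paper's Proposition~\ref{prop:specificineq}(ii) route are the same; the only thing gained by knowing the paper's structure is the observation that this re-derivation packages Theorem~\ref{thm:Horn} as a corollary of the more general Theorem~\ref{thm:intro}, rather than being the original (more elementary, and logically independent) proof given in \cite{GOY}.  One small nit: you say the positivity condition ``pins $A_1,A_2$ down in terms of $i$'' from (III)(2) alone, but that alone gives a one-parameter family; it is (III)(3) with $B_1=B_2=\emptyset$ that first forces $A_1=C$ and $C_2=A$, and then (III)(2) determines $A_2,C_1$.  Your later paragraph effectively does this in the right order, so the argument stands.
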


\begin{definition}
For disjoint $X,Y\subseteq [n]$, the subset-sum inequalities are
\begin{equation}
\label{sumsubset}
0\leq \sum_{i\in X} \mu_i + \sum_{i\in Y} \nu_i - \sum_{i\in Y} \mu_i - \sum_{i\in X} \nu_i + \sum_{i=1}^{|X|+|Y|}\lambda_i.
\end{equation}
\end{definition}

\begin{proposition}
\label{prop:specificineq}
Any $(\mu,\nu,\lambda)\in {\sf Par}_n^3$ that satisfy all of the extended Horn inequalities \eqref{Ineq:Grand} also satisfy all $\mathfrak{S}_3$-permutations (\ref{eqn:S3symmetry}) of:
\begin{itemize}
\item[(i)]
the Horn inequalities \eqref{eq:ineq};
\item[(ii)]
the extended Weyl inequalities \eqref{ijklInequality};
\item[(iii)] the subset-sum inequalities \eqref{sumsubset};
\item[(iv)]
the triangle inequalities on $|\mu|$, $|\nu|$, and $|\lambda|$;
\item[(v)]
$|\mu \wedge \nu| \geq \frac{|\mu| + |\nu| - |\lambda|}{2}$.
\end{itemize}
\end{proposition}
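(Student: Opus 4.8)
The plan is to exhibit, for each of (i)--(v), a concrete choice of the data $(A,A',B,B',C,C')\in\mathcal{G}_n$ (together with the auxiliary sets $A_1,A_2,B_1,B_2,C_1,C_2$ witnessing Condition (III)) so that the general inequality (\ref{Ineq:Grand}) specializes to the asserted inequality or to one of its $\mathfrak{S}_3$-images; since $\mathcal{G}_n$ is manifestly $\mathfrak{S}_3$-stable, it suffices to produce one representative of each orbit. The essential point throughout is that (\ref{Ineq:Grand}) has the shape ``positive part minus negative part'', so one wants the primed sets $A',B',C'$ to carry the subtracted variables and the unprimed sets $A,B,C$ the added ones, subject to the cardinality bookkeeping (II) and the Littlewood--Richardson positivity (III).

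For (i), the Horn inequality (\ref{eq:ineq}) is recovered by taking $B=A'=\emptyset$ and, in the notation of Theorem~\ref{thm:classicalHorn}, $A'=\emptyset$ so that $|A|=|C'|$, $|C|=|B'|$ with the remaining set empty; more precisely one sets (after an $\mathfrak{S}_3$-permutation matching the roles) the ``triple $I,J,K$'' of Theorem~\ref{thm:classicalHorn} into the $C$-slot and lets the $A'$-style sets be empty, so that Condition (III) collapses to the single requirement $c^{\tau(K)}_{\tau(I),\tau(J)}>0$ — exactly the hypothesis of (\ref{eq:ineq}) — while Conditions (I),(II) are automatic. For (iii), the subset-sum inequality (\ref{sumsubset}) comes from $A'=Y$, $B'=X$ (disjoint by hypothesis), $C'=\emptyset$, which forces $|A|=|B'|=|X|$, $|B|=|A'|=|Y|$, $|C|=|X|+|Y|$; choosing $A=X$, $B=Y$, $C=[\,|X|+|Y|\,]$, and taking the auxiliary sets so that each relevant Littlewood--Richardson coefficient is one of the trivial ones $c^{\tau(S)}_{\tau(S),\tau(\emptyset)}=1$ or $c^{[d]}_{I,I^{c'}}=1$ (a Pieri/rectangle instance), Conditions (III)(1)--(3) are satisfied, and (\ref{Ineq:Grand}) becomes precisely (\ref{sumsubset}). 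For (iv), the triangle inequality $|\lambda|\le|\mu|+|\nu|$ and its permutations arise from the degenerate choice $A=[n]$-truncated appropriately, i.e.\ $A'=B'=\emptyset$, $C'=[n]$ forcing $|A|=|B|=n$ wait — rather one uses the ``full'' sets with $|A'|=|B'|=0$ so that $C$ ranges over $[|C|]$ with $|C|=0$; concretely taking $A'=B'=C'$ all empty except one, the inequality reduces to a sum of all parts on one side, which is the triangle inequality; this is the $d=n$, $I=J=K=[n]$ boundary case already implicit in Theorem~\ref{thm:classicalHorn}.

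For (ii), the extended Weyl inequalities (\ref{ijklInequality}): here I would take $A=\{j\}$, $A'=\{i\}$ (so a single added and single subtracted $\mu$-variable), $C=\{l\}$, $C'=\{k\}$ similarly for $\lambda$, and $B,B'$ two-element subsets of the $\nu$-indices chosen as $\{m-p+1\}\cup\{M+p+2\}$ and $\emptyset$-adjusted so that (II) reads $|A|=|B'|+|C'|$ etc.; the inequality $m=\min(i-k,l-j)$, $M=\max(i-k,l-j)$ and the parameter $p$ then correspond exactly to the choice of the witnessing sets $A_1,A_2,\dots$, and Conditions (III)(1)--(3) amount to the Littlewood--Richardson positivity statements already verified in \cite{GOY} when (\ref{ijklInequality}) was proved there — so for (ii) I would cite that verification rather than redo it. Finally, (v) is equivalent to a single instance of (iii): writing $X,Y$ to split $[n]$ and using $|\mu\wedge\nu|=\sum_i\min(\mu_i,\nu_i)$, the bound $|\mu\wedge\nu|\ge\frac{|\mu|+|\nu|-|\lambda|}{2}$ rearranges to $|\lambda|\ge |\mu|+|\nu|-2|\mu\wedge\nu| = \sum_i|\mu_i-\nu_i|$, and the right side is maximized by the subset-sum expression $\sum_{i\in X}\mu_i+\sum_{i\in Y}\nu_i-\sum_{i\in Y}\mu_i-\sum_{i\in X}\nu_i$ over the partition $[n]=X\sqcup Y$ with $X=\{i:\mu_i\ge\nu_i\}$; hence (v) follows from (iii) applied to that specific $X,Y$ together with $\lambda_i\ge 0$.

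The main obstacle is (ii): unlike the others, it is not a ``trivial Littlewood--Richardson coefficient'' specialization but genuinely requires the nonvanishing results established in \cite{GOY}, and matching the three indices $i,j,k,l$ and the parameter $p$ to a valid tuple in $\mathcal{G}_n$ — in particular checking the disjointness (I) and the cardinality identities (II) simultaneously with all six positivity conditions of (III) — is the step demanding care. Everything else reduces to exhibiting rectangle/Pieri instances of Littlewood--Richardson coefficients equal to $1$ and to elementary rearrangement.
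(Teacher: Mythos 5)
Your overall framework — produce for each inequality a specific tuple $(A,A',B,B',C,C')\in\mathcal{G}_n$ with explicit auxiliary sets — is exactly the paper's approach, and your choices for (iii) and (v) match the paper essentially verbatim. However, there is a genuine gap in (ii), and the handling of (iv) is confused.

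For (ii) you take $C=\{l\}$, $C'=\{k\}$; this gives the terms $+\lambda_l-\lambda_k$ in \eqref{Ineq:Grand}, while \eqref{ijklInequality} rearranges to $0\leq -\mu_i+\mu_j+\nu_{m-p+1}+\nu_{M+p+2}+\lambda_k-\lambda_l$, so you need $C=\{k\}$, $C'=\{l\}$ (also $B'=\emptyset$ and $B$ is the two-element set, not ``$B,B'$ two-element subsets''). More seriously, you propose to discharge the positivity conditions of (III) by citing \cite{GOY}. That reference proves the \emph{implication} $N_{\mu,\nu,\lambda}>0\Rightarrow\eqref{ijklInequality}$; it does not show that \eqref{ijklInequality} is an extended Horn inequality, i.e.\ that a witnessing tuple $(A_1,\ldots,C_2)$ exists with all six Littlewood--Richardson coefficients positive. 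That membership statement is precisely what Proposition~\ref{prop:specificineq}(ii) asserts and must be verified here. Concretely one takes $(A_1,A_2,B_1,B_2,C_1,C_2)=(\{k\},\{i-k+1\},\emptyset,\emptyset,\{l-j+1\},\{j\})$; five of the six coefficients in (III)(2),(3) are trivially $1$, and the one genuine check is
\[
c^{\tau(B)}_{\tau(C_1),\tau(A_2)}=c^{(M+p,m-p)}_{(l-j),\,(i-k)}>0,
\]
which follows from the Pieri rule since $\{m,M\}=\{i-k,l-j\}$ and $0\leq p\leq m$. Without this step the proof of (ii) is incomplete, and no amount of citing \cite{GOY} fills it.

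For (iv) your description (``$C'=[n]$ forcing $|A|=|B|=n$ wait — rather \ldots'') does not resolve into a valid tuple; the clean argument is simply to specialize (iii) to $X=[n]$, $Y=\emptyset$, which makes \eqref{sumsubset} read $0\leq|\mu|-|\nu|+|\lambda|$, one triangle inequality, with the others by $\mathfrak{S}_3$-symmetry. (Your final appeal to ``$d=n$, $I=J=K=[n]$ in Theorem~\ref{thm:classicalHorn}'' is not applicable: that theorem requires $d<n$ and the constraint $|\lambda|=|\mu|+|\nu|$.) For (v), the appeal to ``$\lambda_i\geq 0$'' is unnecessary once you note $X\sqcup Y=[n]$, so $\sum_{i=1}^{|X|+|Y|}\lambda_i=|\lambda|$; otherwise your reduction of (v) to (iii) is correct.
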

\begin{proof}
A Horn inequality is of the form 
\[\sum_{i\in X} \mu_i \leq \sum_{j\in Y} \nu_j + \sum_{k\in Z} \lambda_k\] 
for $X,Y,Z\subseteq [n]$ and $c^{\tau(X)}_{\tau(Y),\tau(Z)} >0$.  Letting 
\[(A,A',B,B',C,C'):=(\emptyset,X,Y,\emptyset,Z,\emptyset)\] 
with $(A_1,A_2,B_1,B_2,C_1,C_2)$ as $(Z,Y,\emptyset,\emptyset,\emptyset,\emptyset)$ shows this is an extended Horn inequality. The Littlewood-Richardson positivity conditions (III)(2) and (III)(3) clearly hold.
Any ${\mathfrak S}_3$-symmetry of the Horn inequality is of the form (\ref{Ineq:Grand}), since
being of the form (\ref{Ineq:Grand}) is evidently preserved under ${\mathfrak S}_3$.

Similarly, let 
\[(A,A',B,B',C,C'):=(\{j\},\{i\},\{m-p+1,M+p+2\},\emptyset,\{k\},\{l\})\] 
with $(A_1,A_2,B_1,B_2,C_1,C_2)$ as $(\{k\},\{i-k+1\},\emptyset,\emptyset,\{l-j+1\},\{j\})$. Let us only comment on
the assertion $c^{\tau(B)}_{\tau(C_1),\tau(A_2)}=c^{(M+p,m-p)}_{(l-j),(i-k)}>0$, which is true by Pieri's rule. Thus,
\eqref{ijklInequality} is of the form (\ref{Ineq:Grand}).
 
The subset-sum inequalities are of type \eqref{Ineq:Grand}. Let
\[(A,A',B,B',C,C'):=(X,Y,Y,X,[|X|+|Y|],\emptyset)\]
with $(A_1,A_2,B_1,B_2,C_1,C_2):=([|Y|],Y,X,[|X|],\emptyset,\emptyset)$.  By letting $X=[n]$ and $Y=\emptyset$, the triangle inequalities are cases of the subset-sum inequalities. The verification is clear.

Let $X := \{i\in [n]: \mu_i\leq \nu_i\}$, and let $Y := \{i\in [n]: \mu_i> \nu_i\} = X^c$.  Then \[|\mu\wedge \nu| = \sum_{i\in X} \mu_i + \sum_{j\in Y} \nu_j,\] and so (v) can be rewritten as
\begin{equation*}
2\sum_{i\in X} \mu_i + 2\sum_{j\in Y} \nu_j \geq \sum_{i=1}^n \mu_i +  \sum_{j=1}^n \nu_j - \sum_{k=1}^n \lambda_k
\end{equation*}
or
\begin{equation*}
0 \leq \sum_{i\in X} \mu_i -  \sum_{i\in Y} \mu_i +  \sum_{j\in Y} \nu_j - \sum_{j\in X} \nu_j + \sum_{k=1}^n \lambda_k
\end{equation*}

This is a subset-sum inequality, and we are done by (iii).
\end{proof}

\begin{example}
The extended Horn inequalities for $n=2$ are the $\mathfrak{S}_3$-permutations (\ref{eqn:S3symmetry}) of:
\begin{equation}
\label{eqn:n=2a}
0\leq \mu_1 + \nu_1 - \lambda_1
\end{equation}
\begin{equation}
\label{eqn:n=2b}
0\leq \mu_1 + \nu_2 -\lambda_2
\end{equation}
\begin{equation}
\label{eqn:n=2c}
0\leq \mu_1 + \mu_2 + \nu_1 + \nu_2 -\lambda_1 -\lambda_2
\end{equation}
\begin{equation}
\label{eqn:n=2d}
0\leq \mu_1 -\mu_2 -\nu_1 + \nu_2 + \lambda_1 + \lambda_2
\end{equation}
where (\ref{eqn:n=2a}) and (\ref{eqn:n=2b}) are Horn inequalities, (\ref{eqn:n=2c}) is a triangle inequality, and (\ref{eqn:n=2d}) is both an extended Weyl inequality and a subset-sum inequality.
\end{example}

\begin{example}
The extended Horn inequalities for $n=3$ are the $\mathfrak{S}_3$-permutations of the $n=3$ Horn inequalities, extended Weyl inequalities, subset-sum inequalities,
\begin{equation}
\label{eqn:redundant3a}
0\leq -\mu_1 +\mu_2 +\mu_3 +\nu_1 - \nu_2 + \nu_3+ \lambda_1 + \lambda_2-\lambda_3, 
\end{equation}
\begin{equation}
\label{eqn:redundant3b}
0\leq \mu_1 -\mu_2 +\mu_3 +\nu_1 - \nu_2 + \nu_3+ \lambda_1 - \lambda_2+\lambda_3, 
\end{equation}
\begin{equation}
\label{eqn:redundant3c}
0\leq \mu_1 -\mu_2 +\nu_1 - \nu_2  + \lambda_2+\lambda_3, 
\end{equation}
\begin{equation}
\label{eqn:redundant3d}
0\leq \mu_2 -\mu_3 +\nu_2 - \nu_3 + \lambda_2+\lambda_3.
\end{equation}
\end{example}

\begin{example}[Minimal inequalities?]
The Horn inequalities \eqref{eq:ineq} are redundant. One can shorten the list to those where 
\[c_{\tau(I),
\tau(J)}^{\tau(K)}=1;\] 
this is a result of P.~Belkale \cite[Theorem~9]{Belkale} (conjectured by C.~Woodward). That each of these inequalities are 
essential is proved by A.~Knutson-T.~Tao-C.~Woodward \cite[Section~6]{KTW}. We know of no na\"ive analogue
of these results. Specifically, the inequalities (\ref{eqn:redundant3b}), (\ref{eqn:redundant3c}), and (\ref{eqn:redundant3d}) 
are redundant, since they are implied by the ``partition inequalities'', \emph{i.e.}, $\mu,\nu,\lambda\in {\sf Par}_3$.  However, all Littlewood-Richardson coefficients associated with (\ref{eqn:redundant3c}) and (\ref{eqn:redundant3d}) are $1$. 
 \end{example}

 \begin{proposition}
Conjecture~\ref{thm:Classification} holds for $n=2$.
\end{proposition}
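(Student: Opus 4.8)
The plan is to show that the four inequalities \eqref{eqn:n=2a}--\eqref{eqn:n=2d} (together with their $\mathfrak{S}_3$-permutations) and the parity condition \eqref{eqn:parity} are jointly sufficient for $N_{\mu,\nu,\lambda}>0$ when $\mu,\nu,\lambda\in{\sf Par}_2$. Since we already have necessity from Theorem~\ref{thm:intro}, only sufficiency remains. First I would record the combinatorial content of $N_{\mu,\nu,\lambda}$ for two-row partitions: by \eqref{eqn:Newell-Littlewood} we sum $c^\mu_{\alpha,\beta}c^\nu_{\alpha,\gamma}c^\lambda_{\beta,\gamma}$ over $\alpha,\beta,\gamma\in{\sf Par}_2$, and for two-row shapes the Littlewood--Richardson coefficients $c^\mu_{\alpha,\beta}$ are completely explicit (each is $0$ or $1$, governed by the two-row interlacing/Horn conditions $\mu_1\ge\alpha_1,\beta_1\ge\mu_2$, etc., plus $|\mu|=|\alpha|+|\beta|$). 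So $N_{\mu,\nu,\lambda}>0$ iff there is a lattice point $(\alpha,\beta,\gamma)$ in an explicitly describable polytope (intersected with the integer lattice), and I want to show this polytope-with-parity is nonempty exactly when \eqref{eqn:n=2a}--\eqref{eqn:n=2d} hold.

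The cleanest route is to produce an explicit witness $(\alpha,\beta,\gamma)$. Using the $\mathfrak{S}_3$-symmetry \eqref{eqn:S3symmetry} I would normalize, say, $|\mu|\le|\nu|\le|\lambda|$ (or order the parts suitably), and then guess $\beta,\gamma$ as ``balanced'' as the constraints allow: the idea is that $\alpha$ should be (a partition version of) $\frac{\mu+\nu-\lambda}{2}$-ish — more precisely one takes $|\alpha|=\tfrac{|\mu|+|\nu|-|\lambda|}{2}$, $|\beta|=\tfrac{|\mu|+|\lambda|-|\nu|}{2}$, $|\gamma|=\tfrac{|\nu|+|\lambda|-|\mu|}{2}$ (all nonnegative integers by the triangle inequalities \eqref{eqn:n=2c}-type and parity \eqref{eqn:parity}), and then one must choose the second coordinates $\alpha_2,\beta_2,\gamma_2$ so that all six two-row LR conditions hold simultaneously. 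I would set up these as a small system of linear inequalities in the three unknowns $\alpha_2,\beta_2,\gamma_2$ (after fixing the sizes), observe that the feasible region is a box-like region cut out by ``max $\le$ min'' conditions, and verify that the relevant max-of-lower-bounds $\le$ min-of-upper-bounds inequality is precisely a consequence of \eqref{eqn:n=2a}, \eqref{eqn:n=2b}, \eqref{eqn:n=2d} and the partition conditions. A parity bookkeeping step is needed to make sure an \emph{integer} point exists, not just a rational one; here the hypothesis \eqref{eqn:parity} is exactly what is required, and one checks that once the sizes have the right parities the interlacing windows can be hit by integers.

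The main obstacle I anticipate is the case analysis: the ``balanced'' witness above may fall outside ${\sf Par}_2$ (e.g.\ the proposed $\alpha_2$ might need to exceed $\alpha_1$, or go negative) precisely on the boundary strata of the cone, and there one must instead push the witness to a vertex of the feasible polytope and check feasibility by hand. I would organize this by which of the defining inequalities \eqref{eqn:n=2a}--\eqref{eqn:n=2d} are tight, reducing to a handful of boundary cases, and in each verify directly that an explicit integral $(\alpha,\beta,\gamma)$ exists. A secondary, more bookkeeping-heavy obstacle is simply confirming that the \emph{full} list of extended Horn inequalities for $n=2$ is no stronger than \eqref{eqn:n=2a}--\eqref{eqn:n=2d} together with the partition inequalities $\mu_1\ge\mu_2\ge0$ etc.\ — i.e.\ that enumerating all $(A,A',B,B',C,C')\in\mathcal{G}_2$ produces nothing beyond the four displayed families — which is a finite check one can either do by hand or cite from the computer verification already mentioned in the text.
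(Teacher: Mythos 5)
Your proposal takes a genuinely different route from the paper's. The paper's proof is essentially a citation: it invokes \cite[Theorem~5.14]{GOY}, which already establishes that for $n=2$ (with parity) the vanishing $N_{\mu,\nu,\lambda}=0$ forces failure of a Horn or extended Weyl inequality, and then simply appeals to Proposition~\ref{prop:specificineq}(ii) to note those inequalities are subsumed by \eqref{Ineq:Grand}. You instead propose to reprove the $n=2$ classification from scratch by exhibiting an explicit witness triple $(\alpha,\beta,\gamma)$ in \eqref{eqn:Newell-Littlewood}. That is a legitimate and more self-contained strategy --- it is roughly what one would do to prove \cite[Theorem~5.14]{GOY} in the first place --- whereas the paper's argument is shorter because it leans on prior work.

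Two cautionary remarks on your sketch. First, after fixing $|\alpha|,|\beta|,|\gamma|$ by the triangle-inequality split, the LR-positivity conditions in the unknowns $(\alpha_2,\beta_2,\gamma_2)$ are \emph{coupled}: each pair gives sum and difference constraints ($\alpha_2+\beta_2\leq\mu_2$, $|\mu_2-|\beta||\leq\alpha_2-\beta_2\leq|\alpha|-\mu_2$, etc.), so the feasible set is the intersection of three ``diamonds'' in the three coordinate planes together with box constraints --- not a plain box, and not a single ``max of lower bounds $\leq$ min of upper bounds'' check. Eliminating variables (Fourier--Motzkin) will produce the desired reduction to \eqref{eqn:n=2a}--\eqref{eqn:n=2d}, but the bookkeeping is heavier than your sketch suggests, and the integrality of the witness (that an integer feasible point exists whenever a rational one does, given \eqref{eqn:parity}) needs its own small argument. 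Second, your ``secondary obstacle'' --- confirming that $\mathcal{G}_2$ produces nothing beyond \eqref{eqn:n=2a}--\eqref{eqn:n=2d} and partition inequalities --- is a non-issue for sufficiency: you get to assume \emph{all} inequalities \eqref{Ineq:Grand} hold, so it is harmless if the list were larger; you only need to use the displayed ones. That check matters for the paper's Example, not for this proposition.
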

\begin{proof}
We prove the contrapositive. Suppose $N_{\mu,\nu,\lambda}=0$ and (\ref{eqn:parity}) holds. By 
\cite[Theorem~5.14]{GOY} either an $n=2$ Horn inequality or extended Weyl inequality fails. By Proposition~\ref{prop:specificineq}(ii), the inequalities \eqref{Ineq:Grand} include
the Horn inequalities and the extended Weyl inequalities. Therefore, an inequality  \eqref{Ineq:Grand} is violated.
\end{proof}

\section{Proof of Theorem~\ref{thm:rowcol}}\label{sec:3}

First consider the case where one of the partitions is a single row.  Without loss of generality, $\lambda = (p)$.  By Proposition~\ref{prop:specificineq}(i), $(\mu,\nu,\lambda)$ satisfies all $\mathfrak{S}_3$ permutations of the Horn inequalities.  In particular, they satisfy $\mu_{i+1} \leq \nu_i + \lambda_2$ and $\nu_{i+1} \leq \mu_i + \lambda_2$ for all $i\in [n-1]$.  This implies that $\mu_{i+1} \leq \nu_i $ and $\nu_{i+1} \leq \mu_i$, so $\mu_{i+1},\nu_{i+1}\leq (\mu \wedge \nu)_{i}$ for all $i\in [n-1]$, which is equivalent to saying that $\mu/(\mu \wedge \nu)$ and $\nu/(\mu \wedge \nu)$ are horizontal strips.

Let $k:=\frac{|\mu|+|\nu|-p}{2}$.  Since Proposition~\ref{prop:specificineq}(iv) says that $(\mu,\nu,\lambda)$ satisfies the triangle inequalities, $k\geq 0$. Moreover, $|\mu|+|\nu|+|\lambda|$ is even (by hypothesis), hence $k\in \mathbb{Z}_{\geq 0}$.  Proposition \ref{prop:specificineq}(v) also says that $k\leq |\mu\wedge \nu|$.

\begin{claim}
There exist at least $|\mu\wedge \nu|-k$ columns $i$ such that $\mu'_i = \nu'_i >0$.
\end{claim}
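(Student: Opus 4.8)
The plan is to count, via the hook-length/column structure of the partitions, how many columns $i$ can fail to satisfy $\mu'_i=\nu'_i>0$ and show this failure is controlled by $|\mu\wedge\nu|-k$. First I would set $\theta:=\mu\wedge\nu$ and recall from the preceding paragraph that $\mu/\theta$ and $\nu/\theta$ are horizontal strips. Translating to column (i.e., transpose) language: since $\theta'_i=\min(\mu'_i,\nu'_i)$ for every $i$, and a horizontal strip adds at most one box to each column, we have $\mu'_i-\theta'_i\in\{0,1\}$ and $\nu'_i-\theta'_i\in\{0,1\}$ for all $i$. Consequently, for each column $i$ at least one of $\mu'_i,\nu'_i$ equals $\theta'_i$, and the columns where $\mu'_i=\nu'_i$ \emph{fails} are exactly those where $\{\mu'_i-\theta'_i,\nu'_i-\theta'_i\}=\{0,1\}$.

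Next I would quantify these. Write $F$ for the set of columns with $\mu'_i\neq\nu'_i$ and $E$ for the set of columns with $\mu'_i=\nu'_i=0$ (these are columns $i>\mu'_1$ and $i>\nu'_1$, equivalently $i>\max(\ell(\mu),\ell(\nu))$ where $\ell$ denotes number of parts). A column contributes to ``$\mu'_i=\nu'_i>0$'' precisely when $i\notin F$ and $i\notin E$. Now compute $|F|$: on $F$, exactly one of the two strips $\mu/\theta$, $\nu/\theta$ has a box in column $i$, so $|F|\le |\mu/\theta|+|\nu/\theta|=(|\mu|-|\theta|)+(|\nu|-|\theta|)=|\mu|+|\nu|-2|\theta|$. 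Since $k=\tfrac{|\mu|+|\nu|-p}{2}\ge \tfrac{|\mu|+|\nu|}{2}-|\theta|$ would go the wrong way, instead I use $|\mu|+|\nu|-2|\theta|=2k-p-\ldots$; more precisely $|\mu|+|\nu|=2k+p$, so $|F|\le 2k+p-2|\theta|=2(k-|\theta|)+p$. This is negative unless $p$ is large, so the bound $|F|\le$ (something like) $2(|\theta|-k)$ must instead come from using that $\mu/\theta$ and $\nu/\theta$ are \emph{horizontal} strips of a single row each bounded by $p$: indeed $|\mu/\theta|+|\nu/\theta|\le p$ is false in general, so the correct route is to bound the number of columns where the strips \emph{disagree} by noting the strips together occupy $|\mu|+|\nu|-2|\theta|$ boxes but overlap (columns in neither $F$ nor $E$ with a box in both) on $|\mu\wedge\nu|-k$ — this is the heart of the matter.

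The cleanest approach, which I would carry out in detail, is: let $a:=\#\{i:\mu'_i=\theta'_i+1\}=|\mu/\theta|=|\mu|-|\theta|$ and $b:=\#\{i:\nu'_i=\theta'_i+1\}=|\nu/\theta|=|\nu|-|\theta|$ (horizontal strips hit each column at most once, so these counts equal the sizes). The columns with $\mu'_i=\nu'_i>0$ and this common value strictly exceeding... no — I want columns where $\mu'_i=\nu'_i$, i.e., where the two strips agree (both present or both absent among positive columns). The number of positive columns is $\max(\ell(\mu),\ell(\nu))=\ell(\theta)+[\,a\text{ or }b\text{ extends past }\ell(\theta)\,]$; handle this edge case separately. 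Among the first $\ell(\theta)$ columns, ``disagree'' columns number exactly $|a\triangle b$-support$|$, and agreement columns number $\ell(\theta)-(\text{that})\ge \ell(\theta)-a-b+2\min$... The key inequality I expect to need is $\ell(\theta)\ge |\theta|/(\text{longest row of }\theta)$-type, but more usefully: the number of columns with $\mu'_i=\nu'_i>0$ is $\ge \ell(\theta)-\big((a-m)+(b-m)\big)$ where $m$ is the number of columns where both strips have a box, and then $\ell(\theta)\ge k$...

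The main obstacle, and where I would focus, is establishing that $\ell(\mu\wedge\nu)$ together with the overlap count of the two horizontal strips yields \emph{exactly} the bound $|\mu\wedge\nu|-k$ rather than something weaker; this requires combining (a) $k\le|\mu\wedge\nu|$ and $k\ge 0$ from Proposition~\ref{prop:specificineq}(iv),(v), (b) $|\mu|+|\nu|=2k+p$, and (c) the horizontal-strip constraint $\mu'_i,\nu'_i\le\theta'_i+1$. I anticipate the right statement is: the number of columns $i$ with $\mu'_i=\nu'_i>0$ equals $\ell(\theta)$ minus the number of columns where exactly one strip sits, and one shows $\ell(\theta)-(\text{that count})\ge|\theta|-k$ by checking that each column removed from the count can be charged against a unit decrease of $|\theta|$ relative to the extremal configuration $|\theta|=k$. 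Once the charging/bookkeeping is set up correctly the remaining steps are routine, so I would present the strip reformulation, the definitions of $a,b,m$, the identity for the agreement count, and then the single inequality chain closing the claim.
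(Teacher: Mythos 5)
Your proposal does not close, and the gap is fundamental rather than cosmetic. You set up the right preliminaries (horizontal strips, the exact count of ``disagreement'' columns), but then you try to finish by a purely combinatorial ``charging/bookkeeping'' argument against $|\mu\wedge\nu|-k$. That cannot work: the claim is \emph{not} a consequence of $\mu/(\mu\wedge\nu),\nu/(\mu\wedge\nu)$ being horizontal strips together with $0\le k\le|\mu\wedge\nu|$. For instance, $\mu=(3,1),\nu=(2,2),p=6$ satisfies all of those constraints (and the parity condition), yet has only one column with $\mu'_i=\nu'_i>0$ while $|\mu\wedge\nu|-k=2$. What rules this triple out is that it violates an extended Horn inequality (equivalently $N_{\mu,\nu,(p)}=0$ here). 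In other words, the standing hypothesis of Theorem~\ref{thm:rowcol} --- that $(\mu,\nu,\lambda)$ satisfies \emph{every} inequality (\ref{Ineq:Grand}) --- is load-bearing in the proof of the claim, and your sketch never invokes it.

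The paper's route is: normalize so $\mu_1\le\nu_1$; observe that the number of columns with $\mu'_i>\nu'_i$ is exactly $|\mu|-|\mu\wedge\nu|$, the number with $\mu'_i<\nu'_i$ is exactly $|\nu|-|\mu\wedge\nu|$, and the number with $\max(\mu'_i,\nu'_i)>0$ is exactly $\nu_1$; hence the number of columns with $\mu'_i=\nu'_i>0$ equals $\nu_1-|\mu|-|\nu|+2|\mu\wedge\nu|$. The claim then reduces to the single linear inequality $0\le\nu_1-|\mu|-|\nu|+|\mu\wedge\nu|+k$, which after substituting $k=\tfrac{|\mu|+|\nu|-p}{2}$ and expanding $|\mu\wedge\nu|$ via $X=\{i:\mu_i>\nu_i\}$, $Y=X^c$ is rewritten in the form (\ref{Ineq:Grand}); the proof concludes by exhibiting explicit $(A,A',B,B',C,C')\in\mathcal{G}_n$ together with witnesses $(A_1,A_2,B_1,B_2,C_1,C_2)$. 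This last identification is the actual content of the claim, and it is exactly the step missing from your proposal. Also note a smaller slip along the way: your ``$|F|\le\ldots$'' should be an equality, since $\theta'_i=\min(\mu'_i,\nu'_i)$ forces that no column can have both $\mu'_i>\theta'_i$ and $\nu'_i>\theta'_i$; the paper uses the exact count, not a bound.
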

\begin{proof}
Without loss of generality, say that $\mu_1\leq \nu_1$.  Since $\mu/(\mu \wedge \nu)$ is a horizontal strip, there are $|\mu/(\mu \wedge \nu)| = |\mu|-|\mu \wedge \nu|$ columns $i$ such that $\mu'_i > \nu'_i$, and similarly there are $|\nu|-|\mu \wedge \nu|$ columns $i$ such that $\mu'_i < \nu'_i$.  Since there are $\nu_1$ columns where at least one of $\mu'_i$ or $\nu'_i$ is nonzero, this means that there are $\nu_1 - |\mu| - |\nu| + 2|\mu \wedge \nu|$ columns such that $\mu'_i = \nu'_i >0$, so it suffices to prove that \[\nu_1 - |\mu| - |\nu| + 2|\mu \wedge \nu|\geq |\mu\wedge \nu|-k.\]
Rearranging the terms and substituting in for the definition of $k$, this becomes 
\begin{equation}
\label{eqn:previousSept3}
0 \leq \nu_1 - |\mu| - |\nu| + |\mu \wedge \nu| + \frac{|\mu|+|\nu|-p}{2}.
\end{equation}

Define 
\[X := \{i\in [n]:\mu_i> \nu_i\} \text{\ and $Y := \{i\in [n]:\mu_i\leq \nu_i\} = X^c$.}\] 
By assumption, $1\in Y$.  Hence from \eqref{eqn:previousSept3} and the definition of $|\cdot |$, 
\begin{align*}
0&\leq 2\nu_1 - 2|\mu| - 2|\nu| + 2|\mu \wedge \nu| + (|\mu|+|\nu|-p)\\
& = 2\nu_1 - |\mu| - |\nu| + 2|\mu \wedge \nu| -p
\end{align*}
\begin{align*}
& = 2\nu_1 - \sum_{i=1}^n \mu_i - \sum_{i=1}^n \nu_i + 2\sum_{i\in X} \nu_i + 2\sum_{i\in Y} \mu_i -p\\
& = 2\nu_1 - \sum_{i\in X} \mu_i - \sum_{i\in Y} \nu_i + \sum_{i\in X} \nu_i + \sum_{i\in Y} \mu_i -p\\
& =  - \sum_{i\in X} \mu_i - \sum_{i\in Y\setminus \{1\}} \nu_i + \sum_{i\in X\cup \{1\}} \nu_i + \sum_{i\in Y} \mu_i -p.
\end{align*}
However, this is always true, since 
\[(Y,X,X\cup\{1\},Y\setminus\{1\},[n]\setminus\{1\},\{1\})\in \mathcal{G}_n,\] 
which can be seen by letting 
\begin{multline}\nonumber
(A_1,A_2,B_1,B_2,C_1,C_2) = \\([|X|+1]\setminus\{1\},\{i-1:i\in X\},\{i-1\in \mathbb{Z}_{>0}: i\in Y\},[|Y|]\setminus\{1\},\{1\},\{1\}).
\end{multline}
The verification of $c_{\tau(C_1),\tau(A_2)}^{\tau(B)}>0$ relies on 
\[\tau(\{i-1:i\in X\})=\tau(X\cup \{1\}).\] 
Similarly one checks
$c^{\tau(A)}_{\tau(B_1),\tau(C_2)}>0$.
\end{proof}

Let $\alpha$ be the partition formed by removing the southernmost box from the $|\mu\wedge \nu|-k$ rightmost columns $i$ of $\mu\wedge \nu$ such that $\mu'_i = \nu'_i >0$.  Since $\mu\wedge\nu=(\mu'\wedge\nu')'$, the boxes removed from $\mu\wedge\nu$ to form $\alpha$ are in different columns than the boxes removed from $\mu$ to form $\mu\wedge\nu$ or from $\nu$ to form $\mu\wedge\nu$. Thus, $\mu/\alpha$ and $\nu/\alpha$ are both horizontal strips.  Also,
\begin{align*}
|\mu/\alpha| &= |\mu/(\mu\wedge\nu)| + |(\mu\wedge\nu)/\alpha|\\
& = (|\mu| -|\mu\wedge\nu|) + (|\mu\wedge\nu|-k) \\
& = |\mu|  - \frac{|\mu|+|\nu|-p}{2}\\
& = \frac{|\mu| + p - |\nu|}{2}
\end{align*}
and similarly $|\nu/\alpha| = \frac{|\nu| + p - |\mu|}{2}$.

As a result, one can remove a horizontal strip from $\mu$ of length $ \frac{|\mu| + p - |\nu|}{2}$, and then add a horizontal strip of length $\frac{|\nu| + p - |\mu|}{2}$ back in to result in $\nu$.  This is exactly the statement of Proposition 2.4 of \cite{GOY}, so $N_{\mu,\nu, (p)}>0$.

Now consider the case where one of the partitions is a single column.  Without loss of generality, $\lambda = (1^p)$.  By Proposition \ref{prop:specificineq}, $(\mu,\nu,\lambda)$ satisfies all $\mathfrak{S}_3$ permutations of the Horn inequalities.  In particular, they satisfy $\mu_{i} \leq \nu_i + \lambda_1$ and $\nu_{i} \leq \mu_i + \lambda_1$ for all $i\in [n]$.  This implies that $\mu_{i} \leq \nu_i +1$ and $\nu_{i} \leq \mu_i + 1$, so $\mu_i,\nu_i\leq (\mu \wedge \nu)_{i}+1 $ for all $i\in [n+1]$, which is equivalent to saying that $\mu/(\mu \wedge \nu)$ and $\nu/(\mu \wedge \nu)$ are vertical strips.

Let $k:=\frac{|\mu|+|\nu|-p}{2}$.  As before, $k\in \mathbb{Z}_{\geq 0}$ and $k\leq |\mu\wedge \nu|$.
\begin{claim}
There exist at least $|\mu\wedge \nu|-k$ rows $i$ such that $\mu_i = \nu_i >0$.
\end{claim}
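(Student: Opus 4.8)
The plan is to mimic exactly the proof of the corresponding Claim in the single-row case, transposing ``columns'' to ``rows'' and ``horizontal strips'' to ``vertical strips.'' Concretely, I would first set up notation: without loss of generality assume $\mu_1\leq\nu_1$ is replaced by the analogous normalization, say that the first row where $\mu$ and $\nu$ differ has $\mu_i\leq\nu_i$; more precisely, since $\mu/(\mu\wedge\nu)$ and $\nu/(\mu\wedge\nu)$ are vertical strips, the number of rows $i$ with $\mu_i>(\mu\wedge\nu)_i$ is exactly $|\mu|-|\mu\wedge\nu|$, and the number with $\nu_i>(\mu\wedge\nu)_i$ is $|\nu|-|\mu\wedge\nu|$. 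The total number of rows $i$ with $\mu_i>0$ or $\nu_i>0$ is $\max(\ell(\mu),\ell(\nu))$, and (after normalizing so that, say, $\ell(\nu)\geq\ell(\mu)$, equivalently the longer of the two partitions is $\nu$) this equals $\nu'_1$. Hence the number of rows with $\mu_i=\nu_i>0$ is at least $\nu'_1-(|\mu|-|\mu\wedge\nu|)-(|\nu|-|\mu\wedge\nu|)=\nu'_1-|\mu|-|\nu|+2|\mu\wedge\nu|$.

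Then I would reduce the claim to the inequality
\[
0\leq \nu'_1-|\mu|-|\nu|+|\mu\wedge\nu|+\frac{|\mu|+|\nu|-p}{2},
\]
exactly parallel to (\ref{eqn:previousSept3}). The key move is to recognize this (after clearing the denominator) as an extended Horn inequality (\ref{Ineq:Grand}) applied to the transposed situation — but since (\ref{Ineq:Grand}) is phrased in terms of $\mu_i$ rather than $\mu'_i$, I cannot directly transpose. Instead I would express $\nu'_1$ as $\#\{i:\nu_i\geq 1\}$ and argue combinatorially. Defining $X:=\{i\in[n]:\mu_i>\nu_i\}$ and $Y:=X^c$ as before, and doubling, the inequality becomes
\[
0\leq 2\nu'_1-|\mu|-|\nu|+2|\mu\wedge\nu|-p
= 2\nu'_1-\sum_{i\in X}\mu_i-\sum_{i\in Y}\nu_i+\sum_{i\in X}\nu_i+\sum_{i\in Y}\mu_i-p.
\]
Now $2\nu'_1$ is the obstacle: in the row case this term was $2\nu_1$ and got absorbed into a single-index shift producing the tuple $(Y,X,X\cup\{1\},Y\setminus\{1\},[n]\setminus\{1\},\{1\})\in\mathcal{G}_n$. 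Here $2\nu'_1$ is not of that form, so I would instead bound $\nu'_1\leq n$ when $\ell(\nu)\leq n$ — but that is too weak. The correct approach: note $\nu'_1\geq p$ is false in general; rather I should use $p\leq\nu'_1+\mu'_1$ might fail too. Let me reconsider: in the column case $\lambda=(1^p)$, the Horn inequalities give $\mu'_1\leq\nu'_1+p$ and $\nu'_1\leq\mu'_1+p$, and more usefully $p\leq\mu'_1+\nu'_1$ via the triangle-type inequality on the transposed partitions. So $2\nu'_1-p\geq 2\nu'_1-\mu'_1-\nu'_1=\nu'_1-\mu'_1$, and combined with the normalization $\nu'_1\geq\mu'_1$ this is $\geq 0$; then it remains to show $0\leq-\sum_{i\in X}\mu_i-\sum_{i\in Y}\nu_i+\sum_{i\in X}\nu_i+\sum_{i\in Y}\mu_i$, which is $0\leq\sum_{i\in X}(\nu_i-\mu_i)+\sum_{i\in Y}(\mu_i-\nu_i)$ — but by definition of $X$ each summand is $\leq 0$, so this is the wrong direction. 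Thus this naive splitting fails and the real argument must keep $2\nu'_1$ and $p$ together inside a genuine tuple in $\mathcal{G}_n$.

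So the honest plan is: identify the correct tuple $(A,A',B,B',C,C')\in\mathcal{G}_n$ whose associated inequality (\ref{Ineq:Grand}) is precisely (the doubled form of) the desired inequality, analogously to how the row case used $(Y,X,X\cup\{1\},Y\setminus\{1\},[n]\setminus\{1\},\{1\})$ with witness tuple $(A_1,\dots,C_2)=([|X|+1]\setminus\{1\},\{i-1:i\in X\},\{i-1:i\in Y\},[|Y|]\setminus\{1\},\{1\},\{1\})$. The term $\nu'_1$ (rather than $\nu_1$) signals that the relevant sets should involve the complements/transposes appearing in Theorem~\ref{lemma:boxsymmetry} and Lemma~\ref{lemma:reformulate}; I expect the tuple to be built from $X$, $Y$, the box-complement sets $X^c\cup[n+1,\dots]$, and a block of size $p$ in the $C$-position, with the Littlewood-Richardson positivity checks (III)(2),(III)(3) reduced to Pieri's rule (since $\lambda$ being a column makes everything a vertical-strip computation). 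Once the tuple is verified to lie in $\mathcal{G}_n$ and the inequality established, the rest of the argument proceeds verbatim as in the row case: form $\alpha$ by removing the southernmost — here, easternmost — box from $|\mu\wedge\nu|-k$ of the appropriate rows of $\mu\wedge\nu$, check $\mu/\alpha$ and $\nu/\alpha$ are vertical strips of the right sizes $\frac{|\mu|+p-|\nu|}{2}$ and $\frac{|\nu|+p-|\mu|}{2}$, and invoke the column analogue of Proposition~2.4 of \cite{GOY} to conclude $N_{\mu,\nu,(1^p)}>0$. The main obstacle is pinning down the exact witness tuple for the extended Horn inequality that encodes $2\nu'_1-p$; everything else is a transpose of the row argument.
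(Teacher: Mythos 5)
Your reduction is correct and matches the paper exactly: with $L=\max(\ell(\mu),\ell(\nu))$ (your $\nu'_1$ after normalizing), the vertical-strip count gives $L-|\mu|-|\nu|+2|\mu\wedge\nu|$ rows with $\mu_i=\nu_i>0$, and you correctly double and rewrite the target inequality as
$0\leq 2L - \sum_{i\in X}\mu_i - \sum_{i\in Y}\nu_i + \sum_{i\in X}\nu_i + \sum_{i\in Y}\mu_i - p$
with $X=\{i:\mu_i>\nu_i\}$, $Y=X^c$. You also correctly diagnose that the naive split $2L-p\geq L-\mu'_1\geq 0$ leaves a remainder of the wrong sign. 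But that is where the proposal stops: you explicitly concede you cannot pin down the witness tuple, and your guess about where to look (box-complement sets from Lemma~\ref{lemma:reformulate}, a block of size $p$ in the $C'$-slot) is not what is needed.

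The missing idea is a \emph{case split on $L<p$ versus $L\geq p$}, not a further transposition. When $L\geq p$, the term $2L-p = 2(L-p)+p$ has $2(L-p)\geq 0$ and can be dropped, reducing to $0\leq -\sum_{i\in X}\mu_i - \sum_{i\in Y}\nu_i + \sum_{i\in X}\nu_i + \sum_{i\in Y}\mu_i + p$, which is precisely the subset-sum inequality (\ref{sumsubset}) realized by the tuple $(Y,X,X,Y,[L],\emptyset)\in\mathcal{G}_n$. When $L<p$, the correct move is to pad $A$ and $B$ with the ``filler'' indices $[p]\setminus[L]$ (where $\mu_i=\nu_i=0$), rewriting the inequality as
$0\leq -\sum_{i\in X}\mu_i - \sum_{i\in Y}\nu_i + \sum_{i\in X\cup([p]\setminus[L])}\nu_i + \sum_{i\in Y\cup([p]\setminus[L])}\mu_i + L-(p-L)$
and realizing it via $(Y\cup([p]\setminus[L]),\, X,\, X\cup([p]\setminus[L]),\, Y,\, [L],\, [p]\setminus[L])\in\mathcal{G}_n$, with witness tuple $([|X|],X,Y,[|Y|],[|Y|+p-L]\setminus[|Y|],[|X|+p-L]\setminus[|X|])$; the Littlewood--Richardson positivity in (III)(3) then reduces to a Pieri-type computation. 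Without this dichotomy and these explicit tuples the claim is not established, so the proposal has a genuine gap at its central step.
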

\begin{proof}
Since $\mu/(\mu \wedge \nu)$ is a vertical strip, there are $|\mu/(\mu \wedge \nu)| = |\mu|-|\mu \wedge \nu|$ rows $i$ such that $\mu_i > \nu_i$, and similarly there are $|\nu|-|\mu \wedge \nu|$ rows $i$ such that $\mu_i < \nu_i$.  Let $L = \max(\ell(\mu),\ell(\nu))$.  Since there are $L$ rows where at least one of $\mu_i$ or $\nu_i$ is nonzero, this means that there are $L - |\mu| - |\nu| + 2|\mu \wedge \nu|$ rows such that $\mu_i = \nu_i >0$, so it suffices to prove that \[L - |\mu| - |\nu| + 2|\mu \wedge \nu|\geq |\mu\wedge \nu|-k.\]
Rearranging the terms and substituting in for the definition of $k$, this becomes \[0 \leq L - |\mu| - |\nu| + |\mu \wedge \nu| + \frac{|\mu|+|\nu|-p}{2}.\]

Define 
\[X := \{i\in [L]:\mu_i> \nu_i\} \text{\ and $Y := \{i\in [L]:\mu_i\leq \nu_i\} = [L]\setminus X$.}\]  
Multiplying the above expression by $2$ and using the definition of $|\cdot |$, we get:
\begin{align*}
0&\leq 2L - 2|\mu| - 2|\nu| + 2|\mu \wedge \nu| + (|\mu|+|\nu|-p)\\
& = 2L - |\mu| - |\nu| + 2|\mu \wedge \nu| -p\\
& = 2L - \sum_{i=1}^L \mu_i - \sum_{i=1}^L \nu_i + 2\sum_{i\in X} \nu_i + 2\sum_{i\in Y} \mu_i -p\\
& = 2L - \sum_{i\in X} \mu_i - \sum_{i\in Y} \nu_i + \sum_{i\in X} \nu_i + \sum_{i\in Y} \mu_i -p
\end{align*}

We split the remainder of the proof of the claim into two cases: whether $L<p$ or $L\geq p$.

\noindent \emph{Case 1: ($L< p$)}  Here we can rewrite the above inequality as \[0\leq  - \sum_{i\in X} \mu_i - \sum_{i\in Y} \nu_i + \sum_{i\in X\cup ([p]\setminus [L])} \nu_i + \sum_{i\in Y\cup ([p]\setminus [L])} \mu_i + L-(p-L)\]

However, this is always true, since
\[(Y\cup ([p]\setminus [L]),X,X\cup ([p]\setminus [L]),Y,[L],[p]\setminus [L])\in \mathcal{G}_n\]
which can be seen by letting 
\begin{multline}\nonumber
(A_1,A_2,B_1,B_2,C_1,C_2) = ([|X|],X,Y,[|Y|],[|Y|+p-L]\setminus [|Y|],[|X|+p-L]\setminus [|X|]).
\end{multline}
The slightly trickier verification needed from (III)(3) is 
\[c_{\tau(C_1),\tau(A_2)}^{\tau(B)} =c_{\tau([|Y|-p+L]-[|Y|]),\tau(X)}^{\tau(X\cup ([p]-[L]))}
 =c_{|Y|^{p-L},\tau(X)}^{\tau(X)+(L-|X|)^{p-L}}
 =c_{|Y|^{p-L},\tau(X)}^{\tau(X)+|Y|^{p-L}}>0;\]
the latter is obvious.  The check $c^{\tau(A)}_{\tau(B_1),\tau(C_2)}>0$ is analogous.

\noindent \emph{Case 2: ($L\geq p$)}  Here we can instead rewrite the above inequality as \[0\leq  2(L-p) - \sum_{i\in X} \mu_i - \sum_{i\in Y} \nu_i + \sum_{i\in X} \nu_i + \sum_{i\in Y} \mu_i + p,\]
and so it suffices to show 
\[0\leq  - \sum_{i\in X} \mu_i - \sum_{i\in Y} \nu_i + \sum_{i\in X} \nu_i + \sum_{i\in Y} \mu_i + p.\]

This is true  since
$(Y,X,X,Y,[L],\emptyset)\in \mathcal{G}_n$
is just a subset-sum inequality.
\end{proof}

Let $\alpha$ be the partition formed by removing the rightmost box from the $|\mu\wedge \nu|-k$ southernmost rows $i$ of $\mu\wedge \nu$ such that $\mu_i = \nu_i >0$.  Since the boxes removed from $\mu\wedge\nu$ to form $\alpha$ are in different rows than the boxes removed from $\mu$ to form $\mu\wedge\nu$ or from $\nu$ to form $\mu\wedge\nu$, $\mu/\alpha$ and $\nu/\alpha$ are both vertical strips.  In addition,
\begin{align*}
|\mu/\alpha| &= |\mu/(\mu\wedge\nu)| + |(\mu\wedge\nu)/\alpha|\\
& = (|\mu| -|\mu\wedge\nu|) + (|\mu\wedge\nu|-k) \\
& = |\mu|  - \frac{|\mu|+|\nu|-p}{2}\\
& = \frac{|\mu| + p - |\nu|}{2}
\end{align*}
and similarly $|\nu/\alpha| = \frac{|\nu| + p - |\mu|}{2}$.

As a result, one can remove a vertical strip from $\mu$ of length $ \frac{|\mu| + p - |\nu|}{2}$, and then add a vertical strip of length $\frac{|\nu| + p - |\mu|}{2}$ back in to result in $\nu$.  This is exactly the conjugate statement of Proposition 2.4 of \cite{GOY}, so $N_{\mu,\nu, (1^p)}>0$.\qed

\section*{Acknowledgements}
We thank Allen Knutson and Nicolas Ressayre for helpful remarks on an earlier preprint version, respectively, concerning the history of Theorem~\ref{thm:classicalHorn}, and the relationship between \cite{BK} and this work. We are grateful to David Brewster for writing code essential for checking Conjecture~\ref{thm:Classification}, as part
of his involvement in the NSF RTG funded ICLUE summer program. We also thank the anonymous referees for
their useful comments. We used Anders Buch's Littlewood-Richardson calculator in our experiments. This work was partially supported by a Simons Collaboration grant, 
funding from UIUC's Campus Research Board, and an NSF RTG grant. SG was partially supported by the
National Science Foundation Graduate Research Fellowship Program under Grant No. DGE--1746047.

AY would like to add an additional acknowledgement to Ian Goulden and David Jackson, who were instrumental
in said authors' professional development at Waterloo. Your words were heard, and made a difference.

\end{document}